\newcommand{\R}{\mathbb{R}}
\newcommand{\E}{\mathbb{E}}
\providecommand{\abs}[1]{\lvert#1\rvert}
\providecommand{\norm}[1]{\lVert#1\rVert}
\providecommand{\diag}{{\rm diag}}
\providecommand{\argmax}{{\rm argmax}}
\newcommand{\rs}[1]{{\mbox{\scriptsize \sc #1}}}
\newcommand{\sr}[1]{{\cal #1}}
\newcommand{\dd}[1]{\mathbb{#1}}
\newcommand{\ray}{{\rm r}}
\newcommand{\br}[1]{\left\langle #1 \right\rangle}
\newcommand{\brb}[1]{\big\langle #1 \big\rangle}
\newtheorem{lemma}{Lemma}
\newtheorem{example}{Example}
\newtheorem{proposition}{Proposition}
\newtheorem{corollary}{Corollary}
\newtheorem{theorem}{Theorem}
\theoremstyle{definition}
\numberwithin{equation}{section}
\newtheorem{remark}{Remark}
\newcommand{\eq}[1]{(\ref{eq:#1})}
\newcommand{\lem}[1]{Lemma~\ref{lem:#1}}
\newcommand{\cor}[1]{Corollary~\ref{cor:#1}}
\newcommand{\thr}[1]{Theorem~\ref{thr:#1}}
\newcommand{\pro}[1]{Proposition~\ref{pro:#1}}
\newcommand{\fig}[1]{Figure~\ref{fig:#1}}
\newcommand{\sectn}[1]{Section~\ref{sect:#1}}
\newcommand{\sect}[1]{\ref{sect:#1}}
\begin{document} 

\title{\Large A multi-dimensional SRBM: Geometric views of its product
  form stationary distribution\footnote{Research supported in part by NSF Grants 
CMMI-1030589, CNS-1248117, CMMI-1335724, and JSPS Grant 24310115}}

\author{
J. G. Dai\footnote{School of Operations Research and Information
  Engineering, Cornell University, Ithaca, NY 14853; on leave from
  Georgia Institute of Technology,
jim.dai@cornell.edu}, \quad 
Masakiyo Miyazawa\footnote{Department of Information Sciences, Tokyo
    University of Science, Noda, Chiba 278-0017, Japan;
    miyazawa@is.noda.tus.ac.jp},
{ and }
Jian Wu\footnote{School of Operations Research and Information Engineering, Cornell University, Ithaca, NY 14853, jw926@cornell.edu
} 
} 
\date{May 8, 2014}

\maketitle

\begin{abstract} 
  We present a geometric interpretation of a product form stationary
  distribution for a $d$-dimensional semimartingale reflecting
  Brownian motion (SRBM) that lives in the nonnegative orthant. The
  $d$-dimensional SRBM data can be equivalently specified by $d+1$
  geometric objects: an ellipse and $d$ rays. Using these geometric
  objects, we establish necessary and sufficient conditions for
  characterizing product form stationary distribution. The key idea in
  the characterization is that we decompose the $d$-dimensional
  problem to $\frac{1}{2}d(d-1)$ two-dimensional SRBMs, each of which
  is determined by an ellipse and two rays. This characterization
  contrasts with the algebraic condition of \citet{HarrWill1987a}. A
  $d$-station tandem queue example is presented to illustrate how the
  product form can be obtained using our characterization.  Drawing
  the two-dimensional results in \cite{AvraDaiHase2001,DaiMiya2013},
  we discuss potential optimal paths for a variational problem
  associated with the three-station tandem queue. Except Appendix
  \ref{sec:equiv-two-vers}, the rest of this paper is almost identical to the
  QUESTA paper with the same title. 
\end{abstract}

\section{Introduction}
\label{sect:introduction}
A multidimensional semimartingale reflecting Brownian motion (SRBM)
has been extensively studied in the past as it serves as the diffusion
approximation of a multiclass queueing network and even a more general
stochastic network; see, e.g., \cite{HarrWill1987,
    HarrNguy1993}. In this paper, we focus on a $d$-dimensional 
SRBM $Z=\{Z(t); t \ge 0\}$ that lives on the nonnegative orthant
$\mathbb{R}^d_+$. Its data consists of a (nondegenerate) $d\times
d$ covariance matrix $\Sigma$, a drift vector $\mu\in \R^d$ and a
$d\times d$ reflection matrix $R$.  
An SRBM $Z$ associated with data $(\Sigma, \mu, R )$ is
  defined as a (weak) solution of the 
  following equations: 
\begin{eqnarray}
&&  Z(t) = Z(0) +  X(t) + RY(t) \in \dd{R}^d_+, \quad t\ge 0, \label{eq:RBM1}\\
&&  X=\{X(t), t\ge 0\} \text{ is a  $(\Sigma, \mu)$-Brownian motion},\qquad\label{eq:RBM2} \\
&&  Y(0)=0, Y(\cdot) \text{ is nondecreasing,}\label{eq:RBM3}\\
&&  \int_0^\infty Z_i(t) d Y_i(t) =0 \text{ for } i=1, \ldots, d. \label{eq:RBM4}
\end{eqnarray}
see, e.g., Definition 1 of \cite{DaiHarr1992} for a precise definition.
Thus, in the interior of the orthant, $Z$ behaves as an ordinary Brownian motion with drift vector $\mu $ and covariance matrix $\Sigma $, and $Z$ is pushed in direction  $R^{(j)}$ whenever $Z$ hits the boundary surface $\{z\in \R^d_+: z_{j}=0\}$, where $R^{(j)}$ is the $j$th column of $R$, for $j = 1, {\ldots}, d$. 

A square matrix $A$ is said to be an $\mathcal{S}$-\textit{matrix} if
there exists a vector $w \ge $ 0 such that $A w>0$. (Hereafter, we use
inequalities for vectors as componentwise inequalities.) It is known
that $Z$ exists and is unique in law for each initial
  distribution of $Z(0)$ if and only if $R$ is a completely-$\mathcal{S}$ matrix, that
is, if every principal submatrix of $R$ is an $\sr{S}$ matrix (see,
e.g., \cite{ReimWill1988,DaiWill1995}). We refer to the solution 
$Z$ as $(\Sigma, \mu, R)$-SRBM if the data $\Sigma$, $\mu$ and $R$
need to be specified.

In this paper, we are also concerned with $R$ being a $\sr{P}$ matrix,
which is a square matrix whose principal minors are positive, that is,
each principal sub-matrix has a positive determinant. A
$\mathcal{P}$-matrix is within a subclass of completely-$\mathcal{S}$
matrices; the still more restrictive class of $\mathcal{M}$-matrices
is defined as in Chapter 6 of \cite{BermPlem1979}. 

It is also known that the existence of a stationary distribution for $Z$ requires
\begin{eqnarray}
\label{eq:stability}
  \mbox{$R$ is nonsingular, and $R^{-1} \mu < 0$,}
\end{eqnarray}
but this condition is generally not sufficient (see, e.g.,\cite{BramDaiHarr2010}). 

For applications of the $d$-dimensional SRBM, it is important to obtain
the stationary distribution in a tractable form. However, this is a
very hard problem even for $d=2$.
\citet{HarrWill1987a} show that the $d$-dimensional SRBM  has a
product form  stationary distribution if and only if
 the following
skew symmetry condition 
\begin{eqnarray}
  \label{eq:skew symmetric}
   2\Sigma =R\diag(R)^{-1}\diag(\Sigma) + \diag(\Sigma)\diag(R)^{-1}R^{\rs{t}}
\end{eqnarray}
is satisfied.
Here, for a matrix $A$, $\diag(A)$
denotes the diagonal matrix whose entries are diagonals of $A$, and
$A^T$ denotes the transpose of $A$.  Although many SRBMs arising from
queueing networks do not have product form stationary distributions,
approximations based on product form have been developed to assess the
performance of queueing networks; see, \cite{KangKellLeeWill2009} for an example in the setting of SRBMs and \cite{LatoMiya2014} for an example in the setting of reflecting random
walks.

This paper develops an alternative characterization for a
$d$-dimensional SRBM to have a product form stationary
distribution. This new characterization is based on the geometric
objects associated with the SRBM data $(\Sigma, \mu, R)$. More
specifically, specifying the SRBM data is equivalent to specifying $d+1$
geometric objects: an ellipse $E$ that is specified by $(\Sigma, \mu)$
and $d$ rays that are specified through $R$; the $i$th ray is the unique one that is orthogonal to
$R^{(j)}$ for each $j\neq i$. Ray $i$ intersects the ellipse at a
unique point $\theta^{(i, \ray)}\neq 0$. For each pair $i\neq j$,
$\theta^{(i,\ray)}$ and $\theta^{(j,\ray)}$ span a two-dimensional
hyperplane $\Gamma_{\{i,j\}}$ in $\R^d$ (see \eq{Pij} for its definition). We
draw a line on this hyperplane which goes through the point
$\theta^{(i,\ray)} \in E$ and keeps constant $\theta^{(i,\ray)}_{i}$
in its $i$-th coordinate. This line either is tangent to the
ellipse $E$ at $\theta^{(i,\ray)} \in E$ or intersects the ellipse $E$
at another point.  We denote this point by $\theta^{ij(i,\ray)}$ which
is identical with $\theta^{(i,\ray)}$ if the line is tangent to $E$, and
refer to it as a symmetry point of $\theta^{(i,\ray)}$. Similarly, 
one defines $\theta^{ij(j,\ray)}$ to be the symmetry of
$\theta^{(j,\ray)}$ on the hyperplane $\Gamma_{\{i,j\}}$. 

We prove in \thr{main} that the SRBM has a product form stationary
distribution if and only if $R$ is a ${\cal P}$-matrix and, for every pair $i\neq j$,
\begin{equation}
  \label{eq:symmetryEequal}
  \theta^{ij(i,\ray)} =   \theta^{ij(j,\ray)}.
\end{equation}
\fig{ellipse-ray-1} gives an example illustrating points $\theta^{(1,\ray)}$
and $\theta^{(2,\ray)}$ on the ellipse and their symmetry points $\theta^{12(1,\ray)}$ and $\theta^{12(2,\ray)}$ when $d=2$. 
\begin{figure}[h]
 	\centering
	\includegraphics[height=5.2cm]{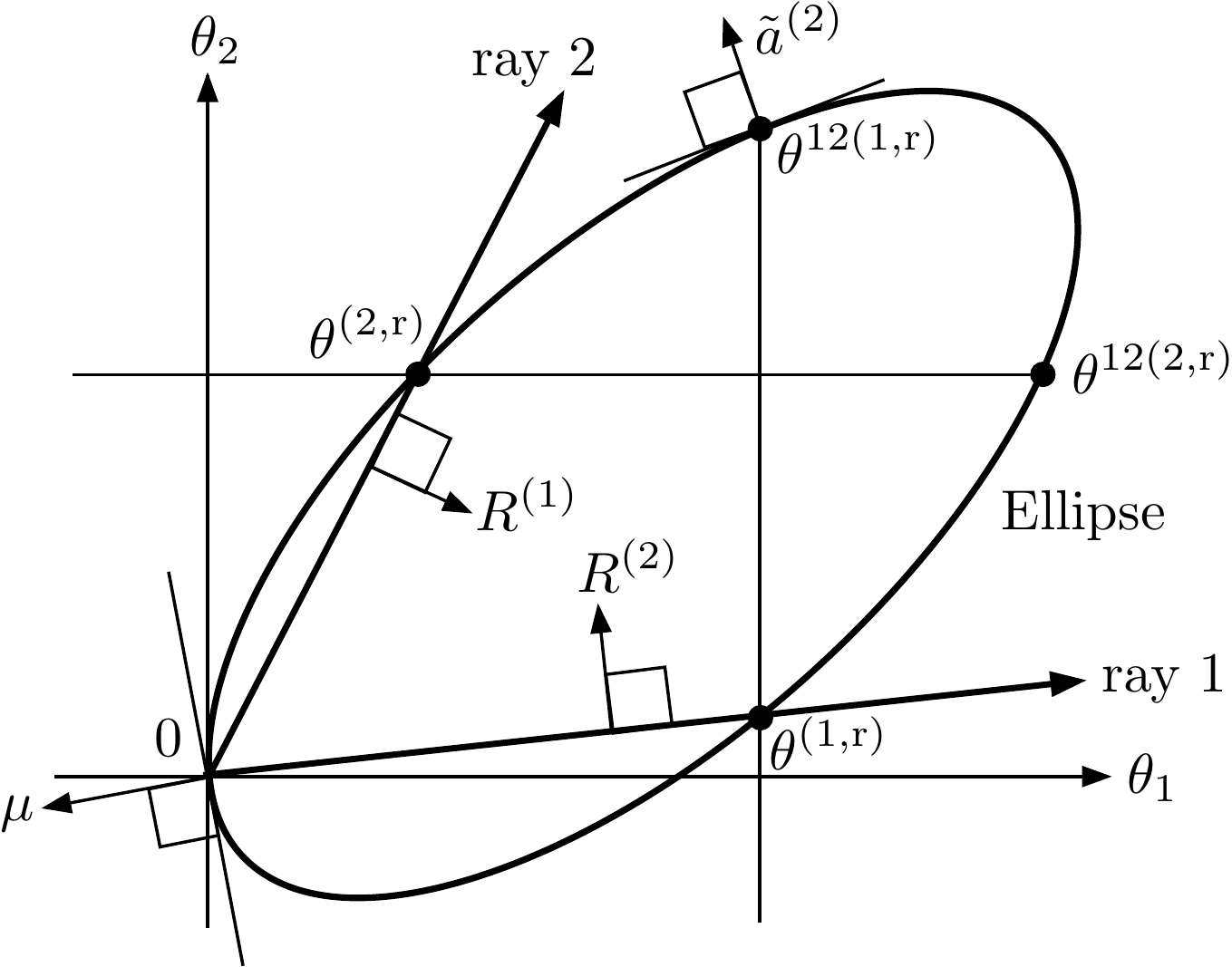} \hspace{2ex}
	\includegraphics[height=5.2cm]{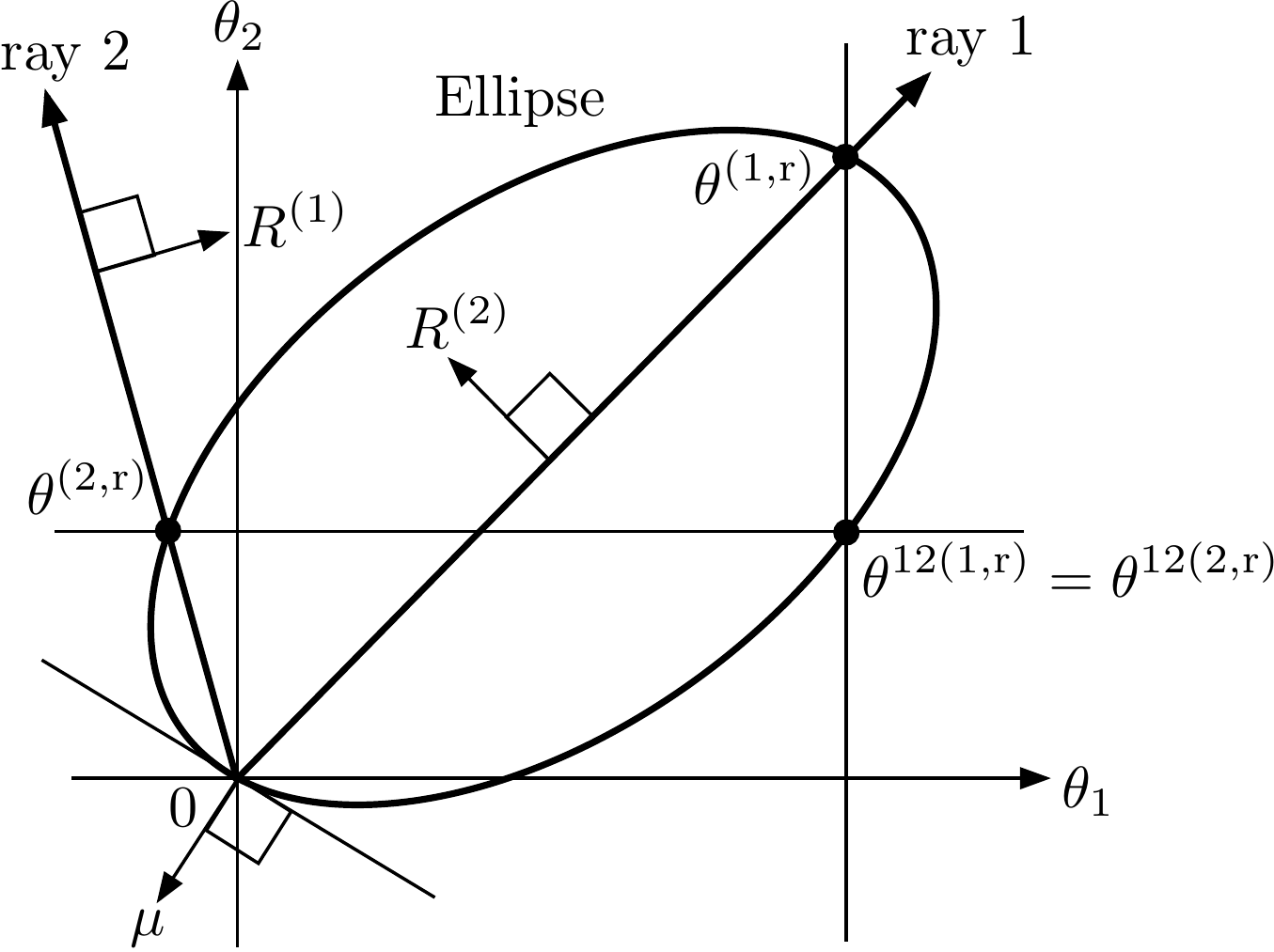}
	\caption{Ellipse, rays and four points to be defined}
	\label{fig:ellipse-ray-1}
\end{figure} 
\thr{main} generalizes the two-dimensional result which is proved in 
\cite{DaiMiya2013}: assume $R$ is a ${\cal P}$-matrix; a two-dimensional SRBM has a product form
stationary distribution if and only if 
\begin{equation}
  \label{eq:2dcharacterization}
  \theta^{12(1,\ray)} = \theta^{12(2,\ray)}.
\end{equation}
We show that those geometric objects on the hyperplane $\Gamma_{\{i,j\}}$
correspond to a two-dimensional SRBM, and we can characterize the the
product form condition of the $d$-dimensional SRBM through the
two-dimensional SRBMs. Interestingly, this simultaneously shows that,
if $R$ is a ${\cal P}$-matrix, then $d$ random
  variables having
the stationary distribution of the SRBM are independent if and only if
each pair of them are independent (see \cor{2}). 

In recent years, there has been an increasing interest in developing
explicit expressions for the tail
asymptotic delay rate of the stationary distribution. However, results are limited for $d=2$ (e.g., see \cite{DaiMiya2011, DaiMiya2013}). There are some studies for $d \ge 3$, but partial results are only available under very restrictive conditions (e.g., see \cite{LianHase2013}). We hope the present geometric interpretations of the product form will make a new step for studying the stationary distribution of a higher dimensional SRBM. We discuss two topics related to this.

The first topic  is about approximation for the stationary distribution. Characterization (\ref{eq:symmetryEequal}) has a potential to allow
one to develop new product form based approximations for the
performance analysis of a general $d$-dimensional SRBM. See \cite{LatoMiya2014} for an example of incorporating tail asymptotics
into product form  approximations. 

The second topic is about a variational problem (VP) associated
with the SRBM. VP is an important, difficult class of problems that
are closely related to the large deviations theory of SRBMs.  See, for
example, \cite{Maje1996,Maje1998,Maje1998a} for the connection between
large deviations and VPs associated with SRBMs. Except for papers
\cite{LianHase2013, KharYaacTahaBich2012}, there has been not
much progress in solving VPs in $d\ge 3$ dimensions.
When $d=2$, \cite{AvraDaiHase2001} shows that the entrance velocities
$\tilde a^{(1)}$ and $\tilde a^{(2)}$ from the first and second
boundary, respectively, play a key role in obtaining the optimal paths
of a VP; see also \cite{HarrHase2009}.  In \cite{DaiMiya2013}, the authors show that 
\begin{displaymath}
  \tilde a^{(2)} = \mu + \Sigma \theta^{12(1,\ray)}.
\end{displaymath}
Namely, the entrance velocity $\tilde a^{(2)}$ from the second boundary
(the $x$ axis) is equal to the outward normal direction of the ellipse
$E$ at the symmetry point $\theta^{12(1,\ray)}$. An analogous
formula holds for $\tilde a^{(1)}$.

For a $d$-dimensional SRBM with a product form stationary
distribution, we have the set of the two dimensional SRBMs which are
used to characterize the product form. These two dimensional SRBMs may
be useful to find the optimal path for the VP because we can apply the
results in \cite{AvraDaiHase2001}. However, we also need to consider
higher dimensional versions of the entrance velocities. This topic will be
discussed using an example, and we conjecture the optimal path for a
three-dimensional product form SRBM. 

This paper consists of five sections. In \sectn{stationary}, we
introduce the basic geometric objects and derive the basic adjoint
relationship (BAR) using the moment generating functions. We also
derive a BAR in quadratic form that characterizes the existence of  
a product form stationary distribution. This characterization is the
foundation of our 
analysis. In \sectn{main}, we introduce the projection idea from the
$d$-dimensional problem to two-dimensional ones and present
our main theorem, \thr{main}.  In \sectn{proof}, we give a detailed proof
of the \thr{main}. In Section \ref{sect:example}, we discuss SRBMs 
arising from tandem queues and the optimal path for some multi-dimensional VPs.

We will use the following notation unless otherwise stated.
\begin{table}[htdp]
\label{tab:primitive}
\begin{center}
\begin{tabular}{ll}
  $J$ & $\{1,2,\ldots, d\}$ \\
  $T^{(i)}$ & the $i$-th column of a square matrix $T$\\
  $T^{ij}$ & 2-dimensional principal matrix composed of the $i$-th and $j$-th rows of $T$\\
  $x^{ij}$ & $(x_{i}, x_{j})^T \in \dd{R}^{2}$ for $x \in \dd{R}^{d}$\\
  $x_{A}$ for $A \subset J$ & the $d$-dimensional vector whose $i$th entry is $x_{i}$ for $i \in A$ and the others zero\\
  $x^{A}$ for $A \subset J$ & $|A|$-dimensional vector $(x_i: i\in A)$\\
  $\br{x, y}$ & $\sum_{i=1}^{d} x_{i} y_{j}$ for $x, y \in \dd{R}^{d}$
\end{tabular}
\caption{A summary of basic notation}
\end{center}
\label{notation 1}
\end{table}%

\section{The stationary distribution and its product form characterization}
\label{sect:stationary}

We assume that $\Sigma$ is positive definite and $R$ is
completely-${\cal S}$ so that $Z$ exists. They, together with the drift
$\mu$, constitute the primitive data of the SRBM. We first describe
them in terms of $d$-dimensional polynomials, which are defined as 
\begin{eqnarray*}
 && \gamma(\theta)=-\frac{1}{2}\langle\theta, \Sigma\theta\rangle-\br{\mu, \theta}, \qquad \theta \in \dd{R}^{d},\\
 && \gamma_{i}(\theta) = \brb{R^{(i)}, \theta}, \qquad \theta \in \dd{R}^{d}, \quad i \in J,
\end{eqnarray*}
where $R^{(i)}$ is the $i$th column of the reflection matrix $R$. Obviously, those polynomials uniquely determine the primitive data, $\Sigma$, $\mu$ and $R$. Thus, we can use those polynomials to discuss everything about the SRBM instead of the primitive data themselves.

Assume the SRBM has a stationary distribution. The stationary
distribution must be unique.
Our first tool is the stationary equation that characterizes
the stationary distribution.  For this, we first introduce
the boundary measures for a distribution $\pi$ on $(\dd{R}_{+}^{d},
\sr{B}(\dd{R}_{+}^{d}))$, where $\sr{B}(\dd{R}_{+}^{d})$ is the Borel
$\sigma$-field on $\dd{R}_{+}^{d}$. They are defined as
\begin{eqnarray*}
\nu_{i}(B)=\E_{\pi} \left[ \int_0^1  1\{ Z(t) \in B\} dY_{i}(t)\right], \qquad B \in \sr{B}(\dd{R}_{+}^{d}), \quad i\in J.
\end{eqnarray*}
The stationary equation is  in terms of moment
generating functions, which  are defined as  
\begin{eqnarray*}
  \varphi(\theta) = \E_{\pi}[ e^{\langle \theta, Z(0)\rangle}],\qquad
  \varphi_{i}(\theta) = \E_{\pi} \left[ \int_0^1  e^{\langle
      \theta, Z(t)\rangle}dY_{i}(t)\right], \quad i\in J,
\end{eqnarray*}
where $\E_\pi$ is the expectation operator when $Z(0)$ is subject to the distribution $\pi$.

Because for each $i\in J$, $Y_{i}(t)$ increases only when $Z_{i}(t)=0$, one has $\varphi_{i}(\theta)$ depends on
$\theta_{J\setminus\{i\}}$ only, where $\theta_{A}$ for $A \subset J$
is the $d$-dimensional vector whose $i$th entry is identical with that
of $\theta$ for $i \in A$ and the entry is zero for $i \in J \setminus
A$. Therefore,  
\begin{displaymath}
\varphi_{i}(\theta)=\varphi_{i}\bigl(\theta_{J\setminus\{i\}}\bigr).
\end{displaymath}

The following lemma is identical to Lemma 1 in \cite{DaiMiyaWu2013}.
We state it  here for easy reference.

\begin{lemma} \label{lem:key}
(a) Assume $\pi$ is the stationary distribution of a $(\Sigma, \mu,
R)$-SRBM.  For $\theta\in \R^d$, $\varphi(\theta)<\infty$ implies
$\varphi_i(\theta)<\infty$ for $i\in J$. Furthermore,
\begin{eqnarray}
    \label{eq:key 1}
    \gamma(\theta) \varphi(\theta)= \sum_{i=1}^d \gamma_{i}(\theta)
    \varphi_{i}\bigl(\theta\bigr),
\end{eqnarray}
holds for $\theta \in \dd{R}^{d}$ such that $\varphi(\theta) < \infty$.
(b) Assume that $\pi$ is a probability measure on $\R^d_+$ and that
$\nu_i$ is a positive finite measure whose support is contained in
$\{x\in\R^d_+: x_i=0\}$ for $i\in J$. Let $\varphi$ and $\varphi_i$
be the moment generating functions of $\pi$ and $\nu_i$,
respectively. If $\varphi$, 
$\varphi_1$, $\ldots$, $\varphi_d$ satisfy (\ref{eq:key 1}) for each $\theta\in
\R^d$ with $\theta\le 0$, then $\pi$ is the stationary distribution
and $\nu_i$ is the boundary measure of the associated SRBM on $\{x\in\R^d_+: x_i=0\}$.
\end{lemma}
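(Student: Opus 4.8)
The engine of both parts is It\^o's formula applied to the exponential test function $e_\theta(z) = e^{\langle\theta,z\rangle}$ along the semimartingale $Z$. Decomposing the driving motion as $dX(t) = \mu\,dt + dB(t)$ with $B$ a driftless Brownian motion of covariance $\Sigma$, and using $dZ = dX + R\,dY$, It\^o gives
\[
e_\theta(Z(t)) - e_\theta(Z(0)) = -\gamma(\theta)\int_0^t e_\theta(Z(s))\,ds + \sum_{i=1}^d \gamma_i(\theta)\int_0^t e_\theta(Z(s))\,dY_i(s) + M(t),
\]
where $M$ is a local martingale, and the identities $\tfrac12\langle\theta,\Sigma\theta\rangle+\langle\mu,\theta\rangle = -\gamma(\theta)$ and $\langle\theta,R^{(i)}\rangle = \gamma_i(\theta)$ explain why exactly the polynomials $\gamma$ and $\gamma_i$ appear as the interior and boundary symbols of the generator acting on $e_\theta$.

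For part (a) the plan is to start from $Z(0)\sim\pi$ and take $\E_\pi$ of the stopped identity over the time window $[0,1]$. By stationarity $\E_\pi[e_\theta(Z(1))] = \E_\pi[e_\theta(Z(0))] = \varphi(\theta)$, so the increment on the left telescopes to $0$; Fubini turns the interior integral into $-\gamma(\theta)\varphi(\theta)$ and the $i$th boundary integral into $\gamma_i(\theta)\varphi_i(\theta)$, which produces \eq{key 1} once the martingale term is shown to vanish in expectation. To control the local martingale I would localize with $\tau_n=\inf\{t:|Z(t)|\ge n\}$, take expectations of the stopped equation (where $M(\cdot\wedge\tau_n)$ is a genuine martingale of zero mean), and then let $n\to\infty$.

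The delicate point --- and the step I expect to be the main obstacle --- is the finiteness assertion together with the attendant limit interchange. When $\theta\le 0$ everything is immediate, since $e_\theta\le 1$ on $\R^d_+$ and the a priori bound $\E_\pi[Y_i(1)]=\nu_i(\R^d_+)<\infty$ (finiteness of the boundary mass of a stationary SRBM) dominates all terms. For a general $\theta$ with $\varphi(\theta)<\infty$ one cannot directly read off the individual finiteness of each $\varphi_i(\theta)$, because \eq{key 1} only controls the signed combination $\sum_i\gamma_i(\theta)\varphi_i(\theta)$. Here I would exploit that $\varphi_i$ does not depend on $\theta_i$: whenever $\theta_i\ge0$, replacing $\theta_i$ by a negative value leaves $\varphi_i$ unchanged while only shrinking $e_\theta$ on the interior, so $\varphi$ stays finite; combining this reduction with a limiting argument as $\theta_i\downarrow-\infty$ and the telescoped identity is how I would extract $\varphi_i(\theta)<\infty$ and justify passing to the limit in the localized equation. (Analytic continuation of \eq{key 1} from the open region $\{\theta<0\}$ is an appealing alternative, but it does not bypass this integrability bootstrap, since it still presupposes finiteness of the $\varphi_i$ on the target domain.)

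For part (b), observe that \eq{key 1} restricted to $\theta\le 0$ is precisely the basic adjoint relationship
\[
\int_{\R^d_+}\mathcal{L}e_\theta\,d\pi + \sum_{i=1}^d\int_{\{x_i=0\}}\langle R^{(i)},\nabla\rangle e_\theta\,d\nu_i = 0
\]
evaluated on the exponentials, where $\mathcal{L}$ is the interior Brownian generator. Since $\{e_\theta:\theta\le0\}$ is a measure-determining family of bounded smooth functions, the plan is to upgrade this relation from exponentials to all $f\in C_b^2(\R^d_+)$ by approximation, and then invoke the known fact that a probability measure together with finite face measures satisfying the basic adjoint relationship must be the stationary distribution and its boundary measures (the characterization underlying Lemma 1 of \cite{DaiMiyaWu2013}). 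The care needed here is precisely in the approximation step: verifying that checking the adjoint relation on the exponentials with $\theta\le 0$ genuinely forces it on the full test class.
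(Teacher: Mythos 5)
Your overall architecture coincides with the paper's: part (a) is the standard It\^o-plus-stationarity argument with localization and a finiteness bootstrap (this is exactly what the paper delegates to Proposition~3 of \cite{DaiHarr1992} and Lemma~4.1 of \cite{DaiMiya2011}, so nothing more is demanded there), and part (b) is the chain ``MGF identity $\Rightarrow$ standard BAR $\Rightarrow$ invoke \cite{DaiKurt1994}.'' The genuine gap is in the middle implication of part (b), which is the only piece of this lemma the paper actually proves in detail (Proposition~\ref{pro:key 1} in Appendix~\ref{sec:equiv-two-vers}): your proposal leaves it as a plan, and the mechanism you suggest for it does not work as stated. You argue that $\{e_\theta:\theta\le 0\}$ is a measure-determining family and propose to ``upgrade by approximation.'' Measure-determining is the wrong property here: \eq{key 1} for real $\theta\le 0$ is an integral identity tested against that family, and to transfer it to all $f\in C^2_b(\R^d_+)$ you must approximate $f$ together with $Lf$ and $D_if$ in a way that passes through the integrals against $\pi$ and the $\nu_i$; there is no direct density argument for real exponentials with nonpositive exponents in this $C^2$-type sense on the unbounded domain $\R^d_+$.

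What the paper actually does, and what your plan is missing, is a detour through complex exponents. Step one extends \eq{key 1} from real $\theta\le 0$ to complex $z$ with $\Re z\le 0$: first on $\{\Re z<0\}$ by analyticity of the moment generating functions there (Lemma~\ref{lem:analytic}), then to the boundary $\{\Re z=0\}$ by dominated convergence. This step is indispensable because the next move is Fourier-analytic: a function $f\in C^\infty_K(\R^d)$ is written as the inverse Fourier integral of the purely imaginary exponentials $e^{2\pi\imath\langle x,\zeta\rangle}$, and Fubini then yields the BAR for such $f$ from the identity at purely imaginary arguments --- which do not belong to your family $\{\theta\le 0\}$ at all. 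Mollification then gives the BAR for $C^2_K(\R^d)$, and a cutoff argument (using finiteness of $\pi$ and the $\nu_i$ to make the tail contribution small) gives it for $C^2_b(\R^d_+)$, at which point Theorem~1.2 of \cite{DaiKurt1994} finishes. Your parenthetical comment in part (a) shows you regard analytic continuation as an optional alternative; in part (b) it is not optional but the essential bridge, and without it the relation checked on real nonpositive exponentials cannot reach the full test class.
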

  Equation
\eq{key 1} is the moment generating function version of the
standard basic adjoint relationship (BAR) that was first derived 
in \cite{HarrWill1987}; for the standard BAR, see also equation (7) of
\cite{DaiHarr1992}.
Part (a) is now standard, following Proposition~3 of
  \cite{DaiHarr1992} and Lemma~4.1 of 
\cite{DaiMiya2011}. For part (b), one can follow a standard
  procedure (see Proposition~\ref{pro:key 1} in Appendix
  \ref{sec:equiv-two-vers}) to  argue that  Equation 
  \eq{key 1}  is equivalent to the standard BAR. The rest of part (b)
is implied by \cite{DaiKurt1994}.

From now on, we always assume that  $\pi$ is the stationary
distribution of the SRBM unless otherwise is stated. 
It follows from  \cite{HarrWill1987a} that the stationary distribution
of SRBM, when exists,  has a density. We use $\zeta(y)$ to denote
the stationary density of $d$-dimensional SRBM. 
Thus, the stationary distribution has product form if and only if
\begin{eqnarray}
  \label{eq:density}
   \zeta(y)=\prod_{i=1}^d \zeta_{i}(y_{i}),
\end{eqnarray}
where $\zeta_{i}$'s are the marginal densities of $\zeta$. It follows
from the first Theorem  in Section 9 of \cite{HarrWill1987} on page 107 that when the stationary density is of
product form in \eq{density}, each $\zeta_{i}$ must be exponential. 
Thus, $d$ -dimensional SRBM has a product form if and only if there
exists a $d$-dimensional vector $\alpha>0$ such that 
\begin{eqnarray}
  \label{eq:density1}
   \zeta(y)=\prod_{i=1}^d \alpha_{i}e^{-\alpha_{i}y_{i}}.
\end{eqnarray}

It is shown in \cite{HarrWill1987a} that, under the skew
symmetry condition \eq{skew symmetric},  
the SRBM has a product-form stationary density in \eq{density1} and $\alpha$ is given by 
\begin{eqnarray}
\label{eq:alpha}
  \alpha = -2\diag(\Sigma)^{-1}\diag(R)R^{-1}\mu.
\end{eqnarray}

In this paper, we provide alternative characterizations for the
product form in terms of a set of two-dimensional SRBMs. For each
two-dimensional SRBM,  a geometric interpretation for the product form
condition is derived in \cite{DaiMiya2013}, and therefore the
necessary and sufficient condition of this paper has also geometric
interpretation. 

The following is a key lemma to characterize the product form of SRBM which will be used repeatedly in
 this paper.

\begin{lemma}
\label{lem:characterization}
Assume $R$ is completely-${\cal S}$ and condition (\ref{eq:stability})
is satisfied.
The $d$-dimensional SRBM has a product form stationary distribution
with its density in \eq{density1}  for some  $\alpha =(\alpha_1,
\ldots, \alpha_d)^T>0$ 
if and only if for some positive constants $C_1$, $\ldots$, 
  $C_d$
\begin{eqnarray}
\label{eq:characterization}
\gamma(\theta)=\sum_{i=1}^d
C_i\gamma_{i}(\theta)(\alpha_{i}-\theta_{i}) \text{ for } \theta\in \R^d.
\end{eqnarray}
Furthermore, if \eq{characterization} holds, then
$C_i=\Sigma_{ii}/(2R_{ii})$, $\alpha$ is given in (\ref{eq:alpha}),
and  $\gamma(\alpha) = 0$.
\end{lemma}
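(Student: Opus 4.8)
The plan is to run both implications through the basic adjoint relationship \eq{key 1} of \lem{key}, exploiting the fact that the product-form density \eq{density1} corresponds to an explicit product-form moment generating function. If $\pi$ has density \eq{density1}, then for $\theta < \alpha$ (componentwise),
\[
  \varphi(\theta) = \prod_{j=1}^d \frac{\alpha_j}{\alpha_j - \theta_j},
\]
and the natural candidate boundary measures on $\{x_i = 0\}$ are product exponentials in the remaining coordinates, with moment generating function $\varphi_i(\theta) = c_i \prod_{j\neq i}\alpha_j/(\alpha_j - \theta_j)$ for some mass $c_i \ge 0$. The single algebraic identity driving everything is
\[
  \varphi_i(\theta) = \frac{c_i}{\alpha_i}(\alpha_i - \theta_i)\,\varphi(\theta),
\]
obtained by pulling the $i$th factor out of $\varphi$; throughout I would set $C_i = c_i/\alpha_i$.

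For the ``if'' direction I would argue constructively. Given \eq{characterization} with $C_i > 0$, put $c_i = C_i\alpha_i > 0$ and let $\pi$ and $\nu_i$ be the product-exponential probability measure and the finite positive boundary measures described above; since $\alpha > 0$, all these moment generating functions are finite on $\{\theta \le 0\}$. The displayed identity gives $\gamma_i(\theta)\varphi_i(\theta) = C_i\gamma_i(\theta)(\alpha_i - \theta_i)\varphi(\theta)$, so summing over $i$ and invoking \eq{characterization} yields $\sum_i \gamma_i(\theta)\varphi_i(\theta) = \gamma(\theta)\varphi(\theta)$; that is, \eq{key 1} holds for every $\theta \le 0$. \lem{key}(b) then identifies $\pi$ as the stationary distribution, which has product form by construction. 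For the ``only if'' direction I would start from the known product form $\varphi(\theta) = \prod_j \alpha_j/(\alpha_j - \theta_j)$ and use that the boundary measures inherit product form (the exponential face-density structure in the Harrison--Williams theory cited after \eq{density1}), so $\varphi_i(\theta) = c_i \prod_{j\neq i}\alpha_j/(\alpha_j - \theta_j)$ with $c_i > 0$. Substituting into \eq{key 1} via \lem{key}(a) and cancelling the nowhere-zero factor $\varphi(\theta)$ then produces \eq{characterization} with $C_i = c_i/\alpha_i > 0$.

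The ``furthermore'' claims follow by reading \eq{characterization} as an identity between quadratic polynomials in $\theta$ and matching coefficients. The coefficient of $\theta_i^2$ is $-C_i R_{ii}$ on the right and $-\tfrac12\Sigma_{ii}$ on the left, giving $C_i = \Sigma_{ii}/(2R_{ii})$; the linear coefficients give $R\,\diag(C)\alpha = -\mu$, i.e.\ $\diag(C)\alpha = -R^{-1}\mu$, which combined with the previous formula yields $\alpha = -2\diag(\Sigma)^{-1}\diag(R)R^{-1}\mu$, namely \eq{alpha}, and condition \eq{stability} guarantees $\alpha > 0$. Evaluating \eq{characterization} at $\theta = \alpha$ annihilates every summand through the factor $(\alpha_i - \alpha_i)$, so $\gamma(\alpha) = 0$. (Matching the off-diagonal quadratic coefficients reproduces the skew-symmetry condition \eq{skew symmetric}, a useful consistency check.)

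The main obstacle is the ``only if'' step: justifying that the boundary moment generating functions genuinely factor as products of exponentials, where the Harrison--Williams face-density structure is essential; once that factorization is in hand, the cancellation in \eq{key 1} and the coefficient-matching for the ``furthermore'' part are routine.
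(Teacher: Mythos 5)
Your proof is correct and follows essentially the same route as the paper: both directions run through \lem{key} with the explicit product-form moment generating functions (the paper likewise quotes the Harrison--Williams boundary density \eq{boundary} to get the ``only if'' direction), and the ``furthermore'' claims come from the same matching of quadratic and linear coefficients. The only step you gloss over is that cancelling $\varphi(\theta)$ yields \eq{characterization} only where the moment generating functions are finite (e.g.\ on $\theta\le 0$), whereas \eq{characterization} is asserted for all $\theta\in\R^d$; as in the paper, you should add that both sides are quadratic polynomials in $\theta$ agreeing on an open set, hence agree everywhere.
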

\begin{remark}
The above lemma can  also  be used to
show that \eq{skew symmetric} holds if and only if the stationary
distribution of SRBM has a product form. See Appendix \ref{sec:skew}.
\end{remark}
\begin{proof}

Assume that SRBM has a product form stationary density as in
\eq{density1} for some  $\alpha =(\alpha_1, \ldots, \alpha_d)^T>0$.
Then following from \cite{HarrWill1987}, we know that the boundary measure $\nu_{i}$ has density:
\begin{eqnarray}
\label{eq:boundary}
\zeta^i(y)=\frac{\Sigma_{ii}}{2R_{ii}}\alpha_{i}\prod_{k\not=i}e^{-\alpha_ky_k}
\text{ for } i\in J \text{ and } y\in \R^d_+.
\end{eqnarray}
Following the above equations, we have:
\begin{eqnarray}
\label{eq:phi}
\varphi(\theta)=\prod_{i=1}^{d}\frac{\alpha_{i}}{\alpha_{i}-\theta_{i}}
\quad \text{ for } \theta < \alpha.
\end{eqnarray}
\begin{eqnarray}
\label{eq:phii}
\varphi_{i}(\theta)=\frac{\Sigma_{ii}}{2R_{ii}}\alpha_{i}\prod_{k\not=i}\frac{\alpha_k}{\alpha_k-\theta_k}
\quad \text{ for } \theta< \alpha.
\end{eqnarray}
Substituting these $\varphi(\theta)$ and $\varphi_{i}(\theta)$ into \eq{key 1} of \lem{key}, we have \eq{characterization} for any
$\theta<\alpha$. In particular,  \eq{characterization} holds for
infinitely many $\theta$'s. Since both sides of  \eq{characterization} 
are quadratic in $\theta$,  \eq{characterization} holds for all
$\theta\in \R^d$.
 Conversely, if there exists an $\alpha>0$ such that
\eq{characterization} holds,  one can define $\varphi(\theta)$ and
$\varphi_{i}(\theta)$ as in \eq{phi} and \eq{phii}.  They satisfy
\eq{key 1}. Then the moment generating functions of the stationary density \eq{density1} and boundary
densities \eq{boundary} satisfy \eq{key 1} for $\theta \le 0$. So according to part (b) of \lem{key}, the SRBM must have \eq{density1} as its
stationary density. 

Assume (\ref{eq:characterization}) holds. Comparing the coefficients of $\theta_i^2$ on both sides of
(\ref{eq:characterization}), we obtain
$C_i={\Sigma_{ii}}/{2R_{ii}}$ for $1 \le i \le d$. By comparing the
coefficients of $\theta_i$ for $i=1, \ldots, d$, we have
(\ref{eq:alpha}). 
The fact that $\gamma(\alpha)=0$ in the last statement is easily verified by substituting $\theta = \alpha$ into \eq{characterization}.
\end{proof}

\section{Geometric objects and main results}
\label{sect:main}
We consider a $d$-dimensional SRBM having data $(\Sigma, \mu, R)$.
We assume that $\Sigma$ is positive definite,   $R$ is
completely-${\cal S}$, and condition
\eq{stability} holds.
From  BAR \eq{key 1}, one can imagine that the tail decay rate of the
stationary distribution would be related to the $\theta$ at which the
coefficients of $\varphi(\theta)$ and
$\varphi_{i}(\theta_{J\setminus\{i\}})$ in \eq{key 1} becomes zero. Thus, we
introduce the following geometric objects:
\begin{eqnarray*}
  E = \{\theta \in \dd{R}^{d}; \gamma(\theta) = 0\}, \qquad \Gamma_{\{i\}} =
  \cap_{k \in J \setminus \{i\}}\{\theta \in \dd{R}^{d};
  \gamma_{k}(\theta) = 0\}, \quad i \in J. 
\end{eqnarray*}
These geometric objects are well defined even when the SRBM does not
have a stationary distribution. 
The object $E$ is an ellipse in $\R^d$.  Since $R$ is
invertible and $\theta \in \Gamma_{\{i\}}$ implies that $\br{\theta, R^{(k)}} =
0$ for $k \ne i$, $\Gamma_{\{i\}}$ must be a line going through the origin.
Clearly, for each $i$, $\Gamma_{\{i\}}$ intersects the ellipse $E$ at most
two points, one of which is the origin. We denote its non-zero
intersection by $\theta^{(i,\ray)}$ if it exists. Otherwise, let
$\theta^{(i,\ray)} = 0$. The following lemma shows that the latter is
impossible by 
giving an explicit formula for $\theta^{(i,\ray)}$.
For that let $B=(R^{-1})^T$ and $B^{(i)}$ be the $i$th column of
$B$. Equivalently, the transpose of ${B^{(i)}}$ is the $i$th row of $R^{-1}$.

\begin{lemma} 
\label{lem:ray}
For each $i\in J$, 
\begin{equation}
  \label{eq:thataray}
  \theta^{(i,\ray)} = \Delta_i B^{(i)},
\end{equation}
where 
\begin{equation}
  \label{eq:Deltai}
  \Delta_i = -\frac{2 \langle \mu, B^{(i)} \rangle}{\langle B^{(i)},
    \Sigma B^{(i)}\rangle}>0.
\end{equation}
\end{lemma}
\begin{proof}
Because $R^{-1}R=I$, we have $B^{(i)}\neq 0$ and $B^{(i)}\in
\Gamma(i)$. Therefore, (\ref{eq:thataray}) holds for some $\Delta_i\in
\R$. Since $\theta^{(i,\ray)}\in E$, we have $\gamma(\Delta_i
B^{(i)})=0$, from which we have the equality in (\ref{eq:Deltai}). If
(\ref{eq:stability}) holds, we have $\langle \mu, B^{(i)} \rangle <0$,
from which the inequality in (\ref{eq:Deltai}) holds.
\end{proof}

Let 
\begin{equation}
\label{eq:A}  
  A =  (\theta^{(1,\ray)}, \ldots, \theta^{(d,\ray)})
\quad \text{ and } \quad
  A^{ij} = \left(\begin{array}{cc} \theta^{(i,\ray)}_{i} & \theta^{(j,\ray)}_{i} \\
  \theta^{(i,\ray)}_{j} & \theta^{(j,\ray)}_{j} \end{array}\right)
\text{ for } 1\le i < j \le d.
\end{equation}
By Lemma \ref{lem:ray},  $A=B\Delta$, where 
$\Delta =\text{diag}(\Delta_1, \ldots, \Delta_d)$.
Clearly, $A^{ij}$ is a $2 \times 2$ principal sub-matrix of $A$. We
let 
\begin{equation}
  \label{eq:cij}
c_{ij}=\det(A^{ij}).
\end{equation}

For each pair $(i, j)$ with $i,j\in J$ and $i<j$, we define the
two-dimensional hyperplane in $\R^d$:
\begin{eqnarray}
\label{eq:Pij}
    \Gamma_{\{i,j\}} =\cap _{k\in J\setminus \{i,j\}} \{\theta \in \R^d;
  \gamma_k(\theta)=0\}.
\end{eqnarray}
We then define a mapping $f^{ij}$ from $\dd{R}^{2}$ to $\Gamma_{\{i,j\}}\subset\R^d$ such that $f^{ij}(\theta_{i}, \theta_{j}) = \theta$ for any $\theta \in \Gamma_{\{i,j\}}$. Sometimes, we write $f^{ij}(\theta_i, \theta_j)$ as $f^{ij}(\theta^{ij})$ where $\theta^{ij}=(\theta_i, \theta_j)^T$. The following lemma confirms that $f^{ij}$ is well defined if $c_{ij} \not=0$. Its proof is given in Appendix
\ref{sec:proofs-lemmas}.

\begin{lemma}
  \label{lem:hyper-2d}
For each $i \ne j \in J$, if $c_{ij}=\det(A^{ij})\neq 0$, then the mapping $f^{ij}$ defined above uniquely exists.
\end{lemma}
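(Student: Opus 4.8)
The plan is to recognize the claim as a statement of linear algebra about the coordinate projection $P\colon \Gamma_{\{i,j\}}\to\R^2$, $\theta\mapsto(\theta_i,\theta_j)$. The desired map $f^{ij}$ is precisely the inverse of $P$, so ``$f^{ij}$ uniquely exists'' amounts to showing that $P$ is a linear isomorphism of the two-dimensional space $\Gamma_{\{i,j\}}$ onto $\R^2$. I would then exhibit $f^{ij}$ explicitly by inverting the matrix of $P$ relative to a convenient basis of $\Gamma_{\{i,j\}}$, and identify that matrix with the block $A^{ij}$ from \eq{A}.

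First I would verify that $\Gamma_{\{i,j\}}$ defined in \eq{Pij} is exactly a two-dimensional subspace of $\R^d$. Since $R$ is invertible, its columns $R^{(k)}$ are linearly independent, so the $d-2$ linear functionals $\gamma_k=\br{R^{(k)},\cdot}$ for $k\in J\setminus\{i,j\}$ are linearly independent; their common kernel therefore has dimension $2$. Next I would exhibit a basis of this subspace using the rays. Because $\theta^{(i,\ray)}\in\Gamma_{\{i\}}$ annihilates $\gamma_k$ for every $k\neq i$, and in particular for every $k\in J\setminus\{i,j\}$, we get $\theta^{(i,\ray)}\in\Gamma_{\{i,j\}}$, and likewise $\theta^{(j,\ray)}\in\Gamma_{\{i,j\}}$. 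By \lem{ray} these vectors equal $\Delta_i B^{(i)}$ and $\Delta_j B^{(j)}$ with $\Delta_i,\Delta_j>0$; since $B=(R^{-1})^T$ is invertible its columns are independent, so $\theta^{(i,\ray)}$ and $\theta^{(j,\ray)}$ are linearly independent and hence form a basis of the two-dimensional space $\Gamma_{\{i,j\}}$.

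With this basis in hand, I would compute the matrix of $P$. Writing an arbitrary $\theta\in\Gamma_{\{i,j\}}$ as $\theta=a\,\theta^{(i,\ray)}+b\,\theta^{(j,\ray)}$ and reading off its $i$th and $j$th coordinates gives $(\theta_i,\theta_j)^T=A^{ij}(a,b)^T$, so $A^{ij}$ is exactly the matrix of $P$ relative to the basis $\{\theta^{(i,\ray)},\theta^{(j,\ray)}\}$ of the domain and the standard basis of $\R^2$. Hence $P$ is invertible as soon as $c_{ij}=\det(A^{ij})\neq 0$, as in \eq{cij}, in which case one defines $f^{ij}(\theta_i,\theta_j)=a\,\theta^{(i,\ray)}+b\,\theta^{(j,\ray)}$ with $(a,b)^T=(A^{ij})^{-1}(\theta_i,\theta_j)^T$; this is manifestly the unique map with the required property. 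The only point needing care --- and the step I would treat as the crux --- is confirming that the two rays genuinely \emph{span} $\Gamma_{\{i,j\}}$, not merely lie inside it, and that the coordinate read-off matrix is literally $A^{ij}$; once these are pinned down the conclusion for $c_{ij}\neq0$ is immediate.
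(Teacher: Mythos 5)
Your proof is correct, but it takes a genuinely different route from the paper's. The paper never counts the dimension of $\Gamma_{\{i,j\}}$: it proves existence by exhibiting the formula \eq{fij} and checking it reproduces any $\theta$ written as a combination of the two rays, and then proves uniqueness (injectivity of the coordinate projection on $\Gamma_{\{i,j\}}$) by a separate contradiction argument showing that the $(d-2)\times(d-2)$ principal submatrix $R^{|ij}$ is nonsingular whenever $c_{ij}\neq 0$. You instead observe that, since $R$ is invertible, the $d-2$ functionals $\gamma_k$, $k\in J\setminus\{i,j\}$, are linearly independent, so $\Gamma_{\{i,j\}}$ is exactly two-dimensional; the rays lie in it and are linearly independent (by \lem{ray} and the invertibility of $B$, using $\Delta_i,\Delta_j>0$ from \eq{stability}), hence form a basis, and the coordinate projection $P$ has matrix $A^{ij}$ in that basis, so $c_{ij}\neq 0$ makes $P$ a linear isomorphism and $f^{ij}=P^{-1}$ exists uniquely. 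Your approach buys economy: existence, uniqueness, and the role of the hypothesis $c_{ij}\neq 0$ all fall out of a single rank--nullity observation, whereas the paper's uniqueness step requires the auxiliary nonsingularity of $R^{|ij}$ (itself proved by contradiction from $c_{ij}\neq 0$); the paper's route, on the other hand, produces that nonsingularity statement as a byproduct and works directly with the explicit formula \eq{fij} that is reused later. Two small remarks: your filling of the ``crux'' (that the rays genuinely span $\Gamma_{\{i,j\}}$) is exactly the step the paper's existence argument glosses over when it writes an arbitrary $\theta\in\Gamma_{\{i,j\}}$ as $a\theta^{(i,\ray)}+b\theta^{(j,\ray)}$; and your inverse $(a,b)^T=(A^{ij})^{-1}(\theta_i,\theta_j)^T$ is the correct adjugate formula --- the displayed \eq{fij} in the paper appears to have the diagonal entries of the adjugate interchanged, so your explicit expression is actually the right one.
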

Since both points $\theta^{(i,\ray)}$ and $\theta^{(j
,\ray)}$  are on $\Gamma_{\{i,j\}}$ and they are linearly independent, for $z^{ij} \equiv (z_{i}, z_{j})^T \in \dd{R}^{2}$, $f^{ij}(z^{ij})$ 
is a linear combination of $\theta^{(i,\ray)}$ and
$\theta^{(j,\ray)}$.
Indeed, one can check that
\begin{equation}
  \label{eq:fij}
  f^{ij}(z^{ij}) = \frac{1}{c_{ij}} \Bigl(
\bigl ( \theta^{(i,\ray)}_i z_i -\theta^{(j,\ray)}_i z_j \bigr )
\theta^{(i,\ray)} 
+ \bigl ( -\theta^{(i,\ray)}_j z_i +\theta^{(j,\ray)}_j z_j \bigr )
\theta^{(j,\ray)} \Bigr).
\end{equation}
We remark that $c_{ij}$ in (\ref{eq:cij}) can be zero even if $R$ is completely-${\cal
  S}$ and condition (\ref{eq:stability}) is satisfied; see Example
\ref{exa:pmatrix} in Appendix~\ref{sec:Examples}.

The intersection $E\cap \Gamma_{\{i,j\}}$ of the ellipse and the hyperplane  is an
 ellipse  on  hyperplane $\Gamma_{\{i,j\}}$.  Both
$\theta^{(i,\ray)}$ and $\theta^{(j,\ray)}$ are on $E\cap \Gamma_{\{i,j\}}$.
Now we define two points $\theta^{ij(i,\ray)}$ and $
\theta^{ij(j,\ray)}$  that are symmetries of 
$\theta^{(i,\ray)}$ and $\theta^{(j,\ray)}$ on $E\cap \Gamma_{\{i,j\}}$,
respectively.  
If 
\begin{equation}
  \label{eq:extremeCondition}
  \theta^{(i,\ray)}_i = \argmax\{ \theta_i: \theta \in E\cap \Gamma_{\{i,j\}}\},
\end{equation}
define $\theta^{ij(i,\ray)}=\theta^{(i,\ray)}$. 
Otherwise, define $\theta^{ij(i,\ray)}$ to be the unique $\theta\in
\R^d$ that satisfies
\begin{equation}
  \label{eq:tildetheta}
  \theta \in E\cap \Gamma_{\{i,j\}}, \quad  \theta_i =
  \theta^{(i,\ray)}_i, \quad
\text{ and } \quad \theta\neq \theta^{(i,\ray)}.
\end{equation}
The point $\theta^{ij(i,\ray)}$ is well defined because of the
following lemma, which will be proved in Appendix~
\ref{sec:proofs-lemmas}. 
\begin{lemma}
  \label{lem:uniqueSymmetry}
If (\ref{eq:extremeCondition}) holds, the quadratic equation 
\begin{equation}
  \label{eq:quadraticzj}
\gamma\Bigl(\frac{1}{c_{ij}} \Bigl(
\bigl ( \theta^{(i,\ray)}_i \theta^{(i,\ray)}_i-\theta^{(j,\ray)}_i z_j \bigr )
\theta^{(i,\ray)} 
+ \bigl ( -\theta^{(i,\ray)}_j \theta^{(i,\ray)}_i
+\theta^{(j,\ray)}_j z_j \bigr ) 
\theta^{(j,\ray)} \Bigr)\Bigr)=0  
\end{equation}
has a unique (double) solution $z_j=\theta^{(i,\ray)}_j$. Otherwise,
(\ref{eq:quadraticzj}) has two solutions $z_j'=\theta^{(i,\ray)}_j$ and
$z_j\neq \theta^{(i,\ray)}_j$. 
\end{lemma}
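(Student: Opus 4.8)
The plan is to analyze the quadratic equation \eqref{eq:quadraticzj} directly, treating it as a function of the single scalar unknown $z_j$. Let me write $g(z_j) = \gamma(f^{ij}(\theta^{(i,\ray)}_i, z_j))$, using the formula for $f^{ij}$ in \eqref{eq:fij} with $z_i$ fixed at $\theta^{(i,\ray)}_i$. Since $f^{ij}$ is affine in $(z_i,z_j)$ and $\gamma$ is quadratic, $g$ is a quadratic polynomial in $z_j$. The whole lemma reduces to understanding the roots of this one-variable quadratic, so the first step is to confirm that the leading coefficient of $g$ is nonzero (so that $g$ genuinely has degree two and the root-counting is meaningful); this follows because $\Sigma$ is positive definite and $\theta^{(i,\ray)}, \theta^{(j,\ray)}$ are linearly independent, so the direction in which $z_j$ moves $f^{ij}$ is a nonzero vector, and $-\tfrac12\langle v,\Sigma v\rangle \neq 0$ for any such direction $v$.

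Next I would observe the geometric content: as $z_j$ ranges over $\R$ with $z_i = \theta^{(i,\ray)}_i$ held fixed, the point $f^{ij}(\theta^{(i,\ray)}_i, z_j)$ traces out the line inside $\Gamma_{\{i,j\}}$ on which the $i$th coordinate equals $\theta^{(i,\ray)}_i$ (this uses that the $i$th coordinate of $f^{ij}(z^{ij})$ is exactly $z_i$, which can be read off from \eqref{eq:fij} by checking the $i$th component). The roots of $g$ are precisely the values of $z_j$ for which this point lands on the ellipse $E$, i.e. the intersections of that line with $E\cap\Gamma_{\{i,j\}}$. Since $z_j=\theta^{(i,\ray)}_j$ always gives the point $\theta^{(i,\ray)}\in E\cap\Gamma_{\{i,j\}}$, that value is always a root; the content of the lemma is whether it is a double root or whether a second distinct root exists.

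The heart of the argument is then to show that the double-root case happens exactly when the extremality condition \eqref{eq:extremeCondition} holds. For a quadratic with a known root $z_j'=\theta^{(i,\ray)}_j$, that root is a double root if and only if $g'(\theta^{(i,\ray)}_j)=0$. I would compute $g'(\theta^{(i,\ray)}_j)$ via the chain rule: $g'(z_j) = \langle \nabla\gamma(f^{ij}),\, \partial_{z_j} f^{ij}\rangle$, evaluated at the point $\theta^{(i,\ray)}$. The direction $\partial_{z_j}f^{ij}$ lies in $\Gamma_{\{i,j\}}$ and, crucially, has zero $i$th coordinate (since $z_i$ is held fixed and the $i$th coordinate of $f^{ij}$ equals $z_i$), so it is tangent to the line $\{\theta_i=\theta^{(i,\ray)}_i\}$. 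The condition $g'(\theta^{(i,\ray)}_j)=0$ says this tangent-to-the-line direction is orthogonal to the ellipse's gradient at $\theta^{(i,\ray)}$, i.e. the line is tangent to the ellipse $E\cap\Gamma_{\{i,j\}}$ at $\theta^{(i,\ray)}$. Geometrically, a line of constant $\theta_i$ is tangent to the ellipse precisely at the point where $\theta_i$ achieves its extreme value on the ellipse, which is the statement \eqref{eq:extremeCondition}. Making this last geometric equivalence rigorous is the main obstacle: I expect to need the convexity/compactness of the ellipse $E\cap\Gamma_{\{i,j\}}$ (a genuine ellipse, since $\Sigma$ restricted to the two-dimensional direction space is positive definite) to argue that a supporting line of constant $\theta_i$ touches it at a unique point, that this point is exactly the $\argmax$, and that at any non-extremal level $\theta^{(i,\ray)}_i$ the horizontal line cuts the ellipse in two distinct points — one of which is $\theta^{(i,\ray)}$ and the other the desired second root $z_j\neq\theta^{(i,\ray)}_j$. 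Once the tangency-equals-extremality link is established, Vieta's formulas on the quadratic $g$ deliver the remaining bookkeeping (a double root in the tangent case, two distinct real roots otherwise), completing the proof.
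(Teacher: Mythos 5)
Your skeleton is the same as the paper's proof: reduce \eq{quadraticzj} to a one-variable quadratic $g(z_j)=\gamma\bigl(f^{ij}(\theta^{(i,\ray)}_i,z_j)\bigr)$ --- the paper does this through \lem{gammaijgamma}, writing $g(z_j)=\tilde\gamma^{ij}(\theta^{(i,\ray)}_i,z_j)$ and working with explicit partial derivatives --- observe that $z_j=\theta^{(i,\ray)}_j$ is always a root and that $g$ is genuinely quadratic by positive definiteness, characterize the double root by the vanishing of $g'$ at the known root, and finish with Vieta's formula. Up to that point your proposal and the paper agree.

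The genuine gap is the step you yourself flag as ``the main obstacle,'' and the plan you sketch for it cannot succeed as described. A line $\{\theta_i=c\}$ is tangent to the ellipse $E\cap\Gamma_{\{i,j\}}$ exactly when $c$ is an \emph{extreme} value of $\theta_i$ on that ellipse --- the maximum \emph{or} the minimum --- whereas \eq{extremeCondition} asserts specifically the \emph{maximum}. Convexity, compactness, and uniqueness of the contact point of a supporting line are symmetric between the two extremes, so they cannot upgrade ``tangency'' to ``argmax.'' The dangerous case, which your outline does not address (and which the paper's own proof also passes over when it simply asserts $g'(\theta^{(i,\ray)}_j)\neq 0$ in the ``otherwise'' branch), is that $\theta^{(i,\ray)}$ sits at the \emph{minimum} of $\theta_i$ on the ellipse: then \eq{extremeCondition} fails and yet $g$ has a double root, so the implication you need (extremality fails $\Rightarrow$ two distinct roots) breaks down. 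This can really happen under only the standing assumptions of the section: for $d=2$, $\Sigma=I$, $\mu=(-1,-1)^{\rs{t}}$, and
\begin{displaymath}
R=\begin{pmatrix} 1 & 1 \\ 2 & \sqrt{2}-1 \end{pmatrix},
\end{displaymath}
one checks that $R$ is completely-$\mathcal{S}$, \eq{stability} holds, and $c_{12}\neq 0$, while $\theta^{(1,\ray)}=(1-\sqrt{2},\,1)^{\rs{t}}$ is the minimizer of $\theta_1$ on the ellipse $(\theta_1-1)^2+(\theta_2-1)^2=2$; the line $\theta_1=1-\sqrt{2}$ is tangent and \eq{quadraticzj} has the double root $z_2=1$ even though \eq{extremeCondition} fails. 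What closes the gap is a positivity input absent from your proposal: since $\gamma(0)=0$ and $0\in\Gamma_{\{i,j\}}$, the minimum of $\theta_i$ over $E\cap\Gamma_{\{i,j\}}$ is at most $0$, so tangency at $\theta^{(i,\ray)}$ is equivalent to the argmax condition as soon as $\theta^{(i,\ray)}_i=\Delta_i(R^{-1})_{ii}>0$ --- for instance when $A^{ij}$ is a $\mathcal{P}$-matrix, which is precisely the hypothesis in force wherever \thr{main} and \cor{main} invoke this lemma. Without this (or an equivalent) ingredient, the decisive implication in your argument is unproven, and in the stated generality it is false.
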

Let $z_j$  be the solution in Lemma \ref{lem:uniqueSymmetry}. Then the
symmetry of $\theta^{(i,\ray)}$ is equal to 
\begin{displaymath}
  \theta^{ij(i,\ray)} = f^{ij}\bigl(\theta^{(i,\ray)}_i, z_j\bigr).
\end{displaymath}
Similarly we define $\theta^{ij(j,\ray)}=\theta^{(j,\ray)}$ if 
$\theta^{(j,\ray)}_j=\argmax\{\theta_j: \theta\in E\cap
\Gamma_{\{i,j\}}\}$. Otherwise, it is defined to the unique $\theta\in \R^d$
that satisfies $\theta \in  E\cap \Gamma_{\{i,j\}}$, $\theta_j =
\theta^{(j,\ray)}_j$, and $\theta\neq \theta^{(j,\ray)}$.

We need one more lemma, which will be proved in Appendix \ref{sec:proofs-lemmas}.

\begin{lemma}
\label{lem:pmatrix}
(a) $R^{-1}$ is a $\mathcal{P}$-matrix if $R$ is a $\mathcal{P}$-matrix.\\
(b) Assume that $R^{-1}\mu<0$, $c_{ij}$ in (\ref{eq:cij}) is
  positive if $R$ is $\mathcal{P}$-matrix. \\ 
(c) If $d$-dimensional SRBM has a product form stationary distribution, then $R$ is a $\mathcal{P}$-matrix, and therefore $c_{ij} > 0$.
\end{lemma}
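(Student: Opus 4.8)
The plan is to dispatch the three parts in order, using part (a) to feed part (b) and \lem{characterization} to feed part (c).

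For part (a) I would invoke the classical Jacobi (Schur-complement) identity relating the principal minors of an inverse to the complementary principal minors of the original matrix: for invertible $R$ and any index set $S\subseteq J$ with complement $S^c=J\setminus S$,
\begin{equation*}
  \det\bigl((R^{-1})[S^c]\bigr) = \frac{\det(R[S])}{\det R},
\end{equation*}
where $M[S]$ denotes the principal submatrix on the rows and columns in $S$. If $R$ is a $\mathcal{P}$-matrix then $\det R>0$ and $\det(R[S])>0$ for every $S$, so the right-hand side is positive for every $S$; since $S\mapsto S^c$ is a bijection of the subsets of $J$, every principal minor of $R^{-1}$ is positive, i.e. $R^{-1}$ is a $\mathcal{P}$-matrix. (Alternatively one can use the Fiedler--Pt\'ak sign-change characterization of $\mathcal{P}$-matrices: given $y\neq 0$, put $x=R^{-1}y\neq 0$ and apply the characterization to $R$.)

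For part (b) I would compute $c_{ij}$ explicitly. Since $A=B\Delta$ with $B=(R^{-1})^T$ and $\Delta=\diag(\Delta_1,\dots,\Delta_d)$, the $(k,l)$ entry of $A$ is $A_{kl}=\Delta_l (R^{-1})_{lk}$, whence
\begin{equation*}
  c_{ij}=\det(A^{ij}) = \Delta_i\Delta_j\bigl[(R^{-1})_{ii}(R^{-1})_{jj}-(R^{-1})_{ij}(R^{-1})_{ji}\bigr]
  =\Delta_i\Delta_j\det\bigl((R^{-1})^{ij}\bigr).
\end{equation*}
Because $R$ is a $\mathcal{P}$-matrix it is nonsingular, so the hypothesis $R^{-1}\mu<0$ makes condition \eq{stability} hold, and \lem{ray} gives $\Delta_i,\Delta_j>0$; by part (a) the $2\times2$ principal minor $\det((R^{-1})^{ij})$ is positive. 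Hence $c_{ij}>0$.

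For part (c), the substantive step, I would start from \lem{characterization}: a product form forces the polynomial identity $\gamma(\theta)=\sum_i C_i\gamma_i(\theta)(\alpha_i-\theta_i)$ for all $\theta\in\R^d$ with positive constants $C_i=\Sigma_{ii}/(2R_{ii})$. Matching the coefficients of the quadratic terms $\theta_i\theta_k$ on both sides converts this into the matrix identity
\begin{equation*}
  \Sigma = R\,C + C\,R^T, \qquad C=\diag(C_1,\dots,C_d),
\end{equation*}
equivalently $\Sigma_{ik}=C_i R_{ki}+C_k R_{ik}$. Thus $\tfrac12\Sigma$ is exactly the symmetric part of $RC$, so $RC$ is a real matrix whose symmetric part is positive definite. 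The key fact is that such a matrix is automatically a $\mathcal{P}$-matrix: every principal submatrix again has a positive definite symmetric part, and a real matrix with positive definite symmetric part has positive determinant (its real eigenvalues are positive, and its complex eigenvalues occur in conjugate pairs with positive product). Finally, since $C$ is a positive diagonal matrix, $R=(RC)C^{-1}$ and right multiplication by $C^{-1}$ scales each principal minor of $RC$ by the positive factor $\prod_{i\in S}C_i^{-1}$; hence $R$ inherits the $\mathcal{P}$-matrix property, and "$c_{ij}>0$" then follows from part (b).

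I expect the main obstacle to be part (c): upgrading the easily obtained diagonal positivity $R_{ii}>0$ to the full $\mathcal{P}$-matrix conclusion. The crux is to recast the product-form identity as $\Sigma=RC+CR^T$ and then to invoke the positive-definite-symmetric-part criterion, being careful that it is $RC$ (not $R$ itself) whose symmetric part is controlled, and that multiplication by the positive diagonal $C$ transfers the property back to $R$.
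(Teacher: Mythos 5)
Your proposal is correct, and it reaches the paper's conclusions by a partly different route. Part (b) is essentially the paper's own argument: both reduce to the identity $c_{ij}=\Delta_i\Delta_j\det\bigl((R^{-1})^{ij}\bigr)$ and combine $\Delta_i,\Delta_j>0$ (from \lem{ray} under \eq{stability}) with part (a). The differences lie in how the two $\mathcal{P}$-matrix facts are proved. For (a), the paper quotes the sign-nonreversal characterization of $\mathcal{P}$-matrices from \cite{HornJohn1991} and applies it to $x=R^{-1}y$ --- exactly your parenthetical alternative --- whereas your primary route via Jacobi's identity is equally valid and has the merit of exhibiting each principal minor of $R^{-1}$ explicitly as a ratio of two positive minors of $R$. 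For (c), you and the paper share the key step: matching quadratic coefficients in \eq{characterization} gives the paper's \eq{positive definite}, which is precisely your identity $\Sigma=RC+CR^T$ read as a quadratic form, i.e., the symmetric part of $RC$ equals $\tfrac{1}{2}\Sigma$ and is positive definite. The endgame differs: the paper closes by citing a second characterization from \cite{HornJohn1991} ($A$ is a $\mathcal{P}$-matrix iff for each nonzero $x$ there is a positive diagonal $D(x)$ with $x^TD(x)Ax>0$), applied with the constant choice $D\equiv C^{-1}$; you instead prove the needed implication from scratch --- every principal submatrix of $RC$ inherits a positive definite symmetric part and hence has positive determinant by the eigenvalue-pairing argument, and right multiplication by the positive diagonal $C^{-1}$ rescales each principal minor by a positive factor, so $R$ itself is a $\mathcal{P}$-matrix. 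In effect you supply a self-contained proof of (the constant-$D$ case of) the criterion the paper merely quotes: your version costs a few extra lines but needs no external matrix-theory citations, while the paper's is shorter at the price of invoking two quoted equivalences. Both treatments then obtain $c_{ij}>0$ in (c) from part (b), using that \eq{stability} holds whenever a stationary distribution exists.
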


\begin{theorem}
  \label{thr:main}
  Assume that $\Sigma$ is a $d\times d$ positive definite matrix and
  $\mu$ is a $d$-dimensional vector.  Assume that $R$ is a $d\times d$
  completely-$S$ matrix, and $(R, \mu)$ satisfies (\ref{eq:stability}). 
(a) The $(\Sigma, \mu, R)$-SRBM has a product form stationary
distribution if and only if $R$ is a ${\cal P}$-matrix and 
\begin{equation}
  \label{eq:identicalSymmetry}
 \theta^{ij(i,\ray)}  =   \theta^{ij(j,\ray)}  \quad
\text{ for each  } 1\le i< j\le d.
\end{equation}
(b) If for each $1\le i < j\le d$, $A^{ij}$ in (\ref{eq:A}) is a ${\cal P}$-matrix and
(\ref{eq:identicalSymmetry}) is satisfied, then 
the $(\Sigma, \mu, R)$-SRBM has a product form stationary
distribution.
\end{theorem}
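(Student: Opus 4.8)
The plan is to use \lem{characterization} to convert the analytic statement ``the SRBM has a product form'' into the single polynomial identity \eq{characterization}, and then to read that identity one coordinate-pair at a time by restricting it to the two-dimensional hyperplanes $\Gamma_{\{i,j\}}$. On each $\Gamma_{\{i,j\}}$ the polynomials $\gamma_k$ with $k\neq i,j$ vanish identically, so \eq{characterization} collapses to a two-variable identity in $(\theta_i,\theta_j)$ (through the parametrization $f^{ij}$ of \eq{fij}) that is exactly the product-form identity of the planar SRBM whose ellipse is $E\cap\Gamma_{\{i,j\}}$ and whose two rays are $\Gamma_{\{i\}}$ and $\Gamma_{\{j\}}$. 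Because $f^{ij}$ fixes the $i$th and $j$th coordinates, the geometric data of this planar SRBM --- ellipse, rays, intersection points $\theta^{(i,\ray)},\theta^{(j,\ray)}$, and the constant-$\theta_i$ and constant-$\theta_j$ symmetry points --- coincide with $\theta^{ij(i,\ray)}$ and $\theta^{ij(j,\ray)}$ as defined in \eq{tildetheta}. Thus the planar result \eq{2dcharacterization} of \cite{DaiMiya2013} becomes directly available, and the whole theorem reduces to matching the $d$-dimensional identity against the family of its planar restrictions.

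For the necessity half of (a) I would argue as follows. If the SRBM has a product form, then \lem{pmatrix}(c) gives that $R$ is a $\mathcal P$-matrix, hence $c_{ij}>0$ and all the objects $f^{ij},\theta^{ij(i,\ray)},\theta^{ij(j,\ray)}$ are well defined (\lem{hyper-2d}, \lem{uniqueSymmetry}). By \lem{characterization} the identity \eq{characterization} holds on all of $\R^d$; restricting it to each $\Gamma_{\{i,j\}}$ produces the planar product-form identity, so the planar SRBM on $\Gamma_{\{i,j\}}$ has a product form, and \eq{2dcharacterization} yields \eq{identicalSymmetry} for that pair. This direction is essentially free once the identification of the planar objects is in place.

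The substance is the converse (which gives part (b), and then part (a), since a $\mathcal P$-matrix $R$ forces each $A^{ij}$ to be a $\mathcal P$-matrix via \lem{pmatrix}(a)--(b)). Assume \eq{identicalSymmetry} and that each $A^{ij}$ is a $\mathcal P$-matrix. Applying \eq{2dcharacterization} on each $\Gamma_{\{i,j\}}$, together with the planar version of \lem{characterization}, gives for every pair a planar identity
\begin{displaymath}
  \gamma(\theta)=C_i\gamma_i(\theta)(\alpha_i-\theta_i)+C_j\gamma_j(\theta)(\alpha_j-\theta_j),\qquad \theta\in\Gamma_{\{i,j\}},
\end{displaymath}
with positive constants. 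I would first check that these constants are consistent across pairs: restricting the identity to the single ray $\Gamma_{\{i\}}=\mathrm{span}(B^{(i)})$, on which $\gamma_j$ vanishes and $\gamma_i(tB^{(i)})=t$, pins down $C_i$ and $\alpha_i$ from $\gamma(tB^{(i)})$ alone, so their values do not depend on the partner index $j$. Hence there are well-defined $C_1,\dots,C_d>0$ and $\alpha_1,\dots,\alpha_d>0$ and a single quadratic $D(\theta):=\gamma(\theta)-\sum_{k}C_k\gamma_k(\theta)(\alpha_k-\theta_k)$ that vanishes on every $\Gamma_{\{i,j\}}$; a short computation using $\sum_k R^{(k)}(B^{(k)})^T=I$ shows $D$ has no constant and no linear part, i.e.\ $D$ is a homogeneous quadratic form.

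The main obstacle --- the one genuinely $d$-dimensional step --- is to conclude $D\equiv 0$ from the fact that $D$ vanishes on all the planes $\Gamma_{\{i,j\}}$. I would resolve this by changing to the basis $B^{(1)},\dots,B^{(d)}$ (recall $A=B\Delta$ from \lem{ray}): since $\Gamma_{\{i,j\}}=\mathrm{span}(B^{(i)},B^{(j)})$, in these coordinates the planes $\Gamma_{\{i,j\}}$ are precisely the coordinate $2$-planes, and a homogeneous quadratic form vanishing on every coordinate $2$-plane has all coefficients zero (its $u_k^2$ coefficient is read off from $B^{(k)}\in\Gamma_{\{k,j\}}$, and its $u_ku_l$ coefficient from the plane $\Gamma_{\{k,l\}}$). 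Therefore $D\equiv 0$, which is exactly \eq{characterization}; since the constants are positive, \lem{characterization} delivers the product-form density \eq{density1}. The remaining work is bookkeeping on positivity and the $\mathcal P$-matrix hypotheses: $A^{ij}$ being a $\mathcal P$-matrix is what makes the planar reflection matrix a $\mathcal P$-matrix so that \eq{2dcharacterization} applies and the planar constants are positive, and \lem{pmatrix} transfers the $\mathcal P$-matrix assumption on $R$ in part (a) down to the individual pairs.
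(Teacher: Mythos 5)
Your proposal is correct and is essentially the paper's own argument: both reduce product form to the polynomial identity \eq{characterization} via \lem{characterization}, transfer that identity pairwise to the hyperplanes $\Gamma_{\{i,j\}}$ where Theorem 5.1 of \cite{DaiMiya2013} applies (using \lem{gammaijgamma} and the identification of the planar symmetry points with $\theta^{ij(i,\ray)}$, $\theta^{ij(j,\ray)}$), verify that the constants are consistent across pairs (your restriction to the rays $\Gamma_{\{i\}}$ is the paper's observation that $d^{(ij)}_i\Delta_i\alpha_i=-\mu^*_i$ forces $d^{(ij)}_i$ to be independent of $j$), and reassemble the global identity in the basis $B^{(1)},\ldots,B^{(d)}$ --- your abstract step that a homogeneous quadratic vanishing on every coordinate $2$-plane vanishes identically is exactly what the paper does concretely by reading off $\Sigma^*_{ij}$ and $\mu^*_i$ in \eq{Sigmastarexpre} and forming \eq{characterizationstar}. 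The only cosmetic difference is that you route the necessity half through the planar theorem as well, whereas the paper obtains it by plugging $\theta^{(i,\ray)}$ and $\theta^{ij(i,\ray)}$ directly into \eq{characterization}.
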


The proof of \thr{main} will be given in Section \ref{sect:proof}. For that, we define 
\begin{equation}
\label{eq:Sigmastar}
\Sigma^*=A^{\rs{t}} \Sigma A\quad \text{and} \quad\mu^{*}=A^{\rs{t}} \mu.
\end{equation} 
The main idea in the proof is to prove
that the $d$-dimensional SRBM has a product form stationary distribution if
and only if for each $1\le i< j\le d$, the two-dimensional $(\tilde
\Sigma^{ij}, \tilde \mu^{ij}, \tilde R^{ij})$-SRBM is well defined and
has a product form 
stationary distribution, where
\begin{eqnarray}
\label{eq:2d Sigma}
&& \tilde{\Sigma}^{ij} = ((A^{ij})^{\rs{t}})^{-1} (\Sigma^{*})^{ij}
 (A^{ij})^{-1},\\
&& \tilde{\mu}^{ij} = ((A^{ij})^{\rs{t}})^{-1} (\mu^{*})^{ij}, \quad\text{and}\quad\tilde{R}^{ij} = ((A^{ij})^{\rs{t}})^{-1} \text{diag}(\Delta_i,
\Delta_j).
\end{eqnarray}

In the following corollary, we set
\begin{equation}
  \label{eq:1}
  \tau=(\theta^{(1,\ray)}_1, \ldots, \theta^{(d,\ray)}_d)^T, \quad
  \text{ and } \quad
  \tau^{ij} = (\tau_i, \tau_j)^T.
\end{equation}

\begin{corollary}
\label{cor:main}
Under the assumptions of \thr{main}, the $(\Sigma, \mu, R)$-SRBM has a product form stationary
distribution if and only if and for each $i,j\in J$ with $i<j$, $A^{ij}$ is a ${\cal P}$-matrix and
\begin{eqnarray}
  \label{eq:tauijongamma}
&& \gamma\bigl (  f^{ij}(\tau^{ij}) \bigr )=0, \\
&& \theta^{(i,\ray)}_j \neq \tau_j \text{ if } \theta^{(i,\ray)}_i \neq
 \argmax\{\theta_i: \theta\in E\cap \Gamma_{\{i,j\}}\}, \text{ and }  \label{eq:thetaithetamax}\\
&& \theta^{(j,\ray)}_i \neq \tau_i \text{ if } \theta^{(j,\ray)}_j \neq
 \argmax\{\theta_j: \theta\in E\cap \Gamma_{\{i,j\}}\}. \label{eq:thetajthetamax}
\end{eqnarray}
\end{corollary}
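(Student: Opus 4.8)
The plan is to deduce the corollary from \thr{main} by rewriting the common-symmetry-point condition \eq{identicalSymmetry} as the three analytic conditions \eq{tauijongamma}--\eq{thetajthetamax}. The central observation is that both symmetry points lie on $E\cap\Gamma_{\{i,j\}}$, and by construction $(\theta^{ij(i,\ray)})_i=\theta^{(i,\ray)}_i=\tau_i$ while $(\theta^{ij(j,\ray)})_j=\theta^{(j,\ray)}_j=\tau_j$. Hence if the two symmetry points coincide, their common value $\theta^*$ has $i$-coordinate $\tau_i$ and $j$-coordinate $\tau_j$; since it also lies on $\Gamma_{\{i,j\}}$ and $c_{ij}\neq0$, \lem{hyper-2d} forces $\theta^*=f^{ij}(\tau^{ij})$, and membership $\theta^*\in E$ is exactly \eq{tauijongamma}. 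Throughout I will use that, when $A^{ij}$ is a ${\cal P}$-matrix, $c_{ij}=\det A^{ij}>0$, so $f^{ij}$ is well defined by \lem{hyper-2d}.

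For the ``only if'' direction I would start from a product form stationary distribution, apply \thr{main}(a) to obtain that $R$ is a ${\cal P}$-matrix and that \eq{identicalSymmetry} holds, and then upgrade ``$R$ is ${\cal P}$'' to ``each $A^{ij}$ is ${\cal P}$'': by \lem{pmatrix}(a) and transposition $B=(R^{-1})^{\rs{t}}$ is a ${\cal P}$-matrix, so the diagonal entries $\tau_i=\Delta_i(B^{(i)})_i$ and $\tau_j=\Delta_j(B^{(j)})_j$ of $A^{ij}$ are positive (using $\Delta_i>0$ from \lem{ray}), while $c_{ij}>0$ by \lem{pmatrix}(b); a $2\times2$ matrix with positive diagonal and positive determinant is a ${\cal P}$-matrix. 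Condition \eq{tauijongamma} then follows from the central observation above. Conditions \eq{thetaithetamax} and \eq{thetajthetamax} come from the non-degenerate case: if $\theta^{(i,\ray)}_i$ is not the maximizer of $\theta_i$ over $E\cap\Gamma_{\{i,j\}}$, then $\theta^{ij(i,\ray)}\neq\theta^{(i,\ray)}$, so $\theta^*\neq\theta^{(i,\ray)}$; as both lie on $\Gamma_{\{i,j\}}$ with the same $i$-coordinate $\tau_i$, they must differ in their $j$-coordinate, giving $\theta^{(i,\ray)}_j\neq\theta^*_j=\tau_j$, which is \eq{thetaithetamax} (and \eq{thetajthetamax} follows symmetrically).

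For the ``if'' direction I would assume that each $A^{ij}$ is a ${\cal P}$-matrix and that \eq{tauijongamma}--\eq{thetajthetamax} hold, set $\theta^*=f^{ij}(\tau^{ij})$, verify \eq{identicalSymmetry}, and then invoke \thr{main}(b). By \eq{tauijongamma} we have $\theta^*\in E\cap\Gamma_{\{i,j\}}$ with $\theta^*_i=\tau_i$ and $\theta^*_j=\tau_j$. To identify $\theta^*$ with $\theta^{ij(i,\ray)}$ I would split on the extremality condition \eq{extremeCondition}: in the extreme case \lem{uniqueSymmetry} says $\theta^{(i,\ray)}$ is the only point of $E\cap\Gamma_{\{i,j\}}$ with $i$-coordinate $\tau_i$, forcing $\theta^*=\theta^{(i,\ray)}=\theta^{ij(i,\ray)}$; in the non-extreme case \eq{thetaithetamax} gives $\theta^*_j=\tau_j\neq\theta^{(i,\ray)}_j$, so $\theta^*\neq\theta^{(i,\ray)}$ and $\theta^*$ is the second intersection, i.e. $\theta^*=\theta^{ij(i,\ray)}$. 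The same argument with \eq{thetajthetamax} gives $\theta^*=\theta^{ij(j,\ray)}$, whence \eq{identicalSymmetry}.

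The $2\times2$ linear algebra identifying the coordinates of the symmetry points and the ${\cal P}$-matrix bookkeeping are routine. The step requiring the most care is the degenerate case: one must check that \eq{thetaithetamax}--\eq{thetajthetamax} are precisely the conditions preventing the candidate $\theta^*=f^{ij}(\tau^{ij})$ from collapsing onto $\theta^{(i,\ray)}$ (resp.\ $\theta^{(j,\ray)}$) when it should instead be the distinct second intersection, so that the correspondence between \eq{identicalSymmetry} and \eq{tauijongamma}--\eq{thetajthetamax} is genuinely biconditional rather than one-directional.
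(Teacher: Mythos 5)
Your proposal is correct and follows essentially the same route as the paper's proof: both directions reduce to Theorem~\ref{thr:main} by showing that \eq{identicalSymmetry} is equivalent to \eq{tauijongamma}--\eq{thetajthetamax}, using the uniqueness in Lemma~\ref{lem:hyper-2d} to identify the common symmetry point with $f^{ij}(\tau^{ij})$ and splitting on the extremality condition via Lemma~\ref{lem:uniqueSymmetry}. Your treatment is in fact slightly more explicit than the paper's (notably the extreme/non-extreme case analysis in the sufficiency direction and the direct $2\times 2$ check that $A^{ij}$ is a ${\cal P}$-matrix, where the paper instead notes $A=B\Delta$ is a ${\cal P}$-matrix), but these are presentational differences, not a different argument.
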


\cor{main} will be proved shortly below. One may wonder how the
two-dimensional $(\tilde{\Sigma}^{ij}, \tilde{\mu}^{ij}, 
\tilde{R}^{ij})$-SRBM is related to the two-dimensional marginal process
$\{(Z_{i}(t),Z_{j}(t)), t\ge 0\}$. 
The next corollary answers this question. The proof of this corollary
will be given at the end of Section~\sect{proof}. To state the
corollary, let $Z(\infty)$ be a random vector that has the
distribution 
to the stationary distribution of the $d$-dimensional SRBM $Z=\{Z(t),
t\ge 0\}$. $Z_{i}(\infty)$ and $Z_{j}(\infty)$ are the $i$th and $j$th
components of $Z(\infty)$ for each $i \ne j \in J$.

\begin{corollary}
\label{cor:2}
Under the assumptions of \thr{main}, we have the following facts. (a) The $d$-dimensional SRBM $Z=\{Z(t), t\ge 0\}$ has a product form stationary distribution if and only if, for each $i \neq j\in J$, $\tilde{R}^{ij}$ is a $\sr{P}$-matrix and
the two-dimensional $(\tilde\Sigma^{ij}, \tilde \mu^{ij}, \tilde   R^{ij})$-SRBM has a product form stationary distribution, which is identical to the distribution of $(Z_i(\infty), Z_j(\infty))$. (b) If $R$ is a ${\cal P}$-matrix, then $Z_{1}(\infty), Z_{2}(\infty), \ldots, Z_{d}(\infty)$ are independent if and only if, for each $i \neq j\in J$, $Z_i(\infty)$ and $Z_j(\infty)$ are independent.
\end{corollary}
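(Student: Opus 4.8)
The plan is to deduce both parts of \cor{2} from \thr{main} and its proof, which already reduces the $d$-dimensional product-form question to the collection of two-dimensional $(\tilde\Sigma^{ij},\tilde\mu^{ij},\tilde R^{ij})$-SRBMs, combined with \lem{characterization} and \lem{pmatrix}. For part (a), the equivalence between the $d$-dimensional product form and the product form of each two-dimensional SRBM is precisely the main step in the proof of \thr{main}; here \lem{pmatrix} supplies the translation between "$R$ is a ${\cal P}$-matrix" and the positivity $c_{ij}>0$, which in turn yields the ${\cal P}$-property of each $\tilde R^{ij}$ (its determinant is $\Delta_i\Delta_j/c_{ij}$, with $\Delta_i,\Delta_j>0$ by \lem{ray}, and its diagonal entries are likewise positive). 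So the genuinely new content of (a) is the \emph{distributional identity}.

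For that identity I would argue as follows. Assume the $d$-dimensional SRBM has product form; by \lem{characterization}, $\gamma(\theta)=\sum_k C_k\gamma_k(\theta)(\alpha_k-\theta_k)$ with $\alpha$ given by \eq{alpha} and $C_k>0$. Restricting this identity to the hyperplane $\Gamma_{\{i,j\}}$, on which $\gamma_k\equiv 0$ for $k\neq i,j$ and on which $\theta_i,\theta_j$ serve as coordinates through $f^{ij}$ (so that $\theta_i=s_i$ there), collapses it to $\tilde\gamma^{ij}(s)=C_i\tilde\gamma^{ij}_i(s)(\alpha_i-s_i)+C_j\tilde\gamma^{ij}_j(s)(\alpha_j-s_j)$, where $\tilde\gamma^{ij},\tilde\gamma^{ij}_i,\tilde\gamma^{ij}_j$ are the defining polynomials of the $(\tilde\Sigma^{ij},\tilde\mu^{ij},\tilde R^{ij})$-SRBM. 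This is exactly the two-dimensional instance of \eq{characterization}, with positive constants $C_i,C_j$ and rate vector $(\alpha_i,\alpha_j)$, so the converse half of \lem{characterization} forces the two-dimensional SRBM to have the product-form density $\alpha_i e^{-\alpha_i s_1}\alpha_j e^{-\alpha_j s_2}$. On the other hand, setting $\theta_k=0$ for $k\neq i,j$ in \eq{phi} shows that $(Z_i(\infty),Z_j(\infty))$ is a pair of independent exponentials with the \emph{same} rates $\alpha_i,\alpha_j$. The two laws therefore coincide.

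For part (b), the forward implication is immediate, since joint independence trivially implies pairwise independence. For the converse, assume $R$ is a ${\cal P}$-matrix and that $Z_i(\infty)$ and $Z_j(\infty)$ are independent for every pair $i\neq j$. Since joint independence of the coordinates of an SRBM forces the stationary density to be product form (with each marginal exponential, as recorded in the text), it suffices to establish product form, and by \thr{main} it in turn suffices to verify the symmetry condition \eq{identicalSymmetry} for every pair; equivalently, by the two-dimensional characterization \eq{2dcharacterization} applied to each constructed two-dimensional SRBM, to show that each $(\tilde\Sigma^{ij},\tilde\mu^{ij},\tilde R^{ij})$-SRBM has product form. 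I would therefore aim to prove the single-pair implication that $Z_i(\infty)\perp Z_j(\infty)$ forces the symmetry condition for the pair $(i,j)$, and then run it over all pairs.

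This last step is where I expect the main obstacle. The difficulty is that the bivariate marginal $(Z_i(\infty),Z_j(\infty))$ is \emph{not} itself a two-dimensional SRBM, so the distributional identity of part (a)---available only once full product form is known---cannot be bootstrapped one pair at a time, and the hyperplane $\Gamma_{\{i,j\}}$ carrying the two-dimensional SRBM does not coincide with the coordinate plane $\{\theta_k=0,\ k\neq i,j\}$ carrying the marginal moment generating function. My plan to overcome this is to work directly from the basic adjoint relationship \eq{key 1}: pairwise independence pins the joint transform $\varphi(\theta_i e_i+\theta_j e_j)$ to the product of its marginals, and I would substitute this factorization into \eq{key 1}, exploiting that each $\varphi_i$ is free of $\theta_i$, in order to isolate the off-diagonal ($\theta_i\theta_j$) coefficient relation in \eq{characterization}---which is exactly the skew-symmetry identity equivalent to \eq{identicalSymmetry} for $(i,j)$. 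Assembling these relations across all pairs reconstructs the full identity \eq{characterization}, whereupon \lem{characterization} delivers product form and completes (b).
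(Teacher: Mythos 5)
Your part (a) is correct and is essentially the paper's own argument: the equivalence is read off from \cor{main} (equivalently, from the proof of \thr{main}), and the distributional identity is obtained by restricting \eq{characterization} to $\Gamma_{\{i,j\}}$ --- your collapsed identity is exactly \eq{gammaijcharacterization} --- and then invoking \lem{characterization} again in dimension two, so that both the stationary density of the $(\tilde\Sigma^{ij},\tilde\mu^{ij},\tilde R^{ij})$-SRBM and the $(i,j)$-marginal of \eq{density1} equal $\alpha_i\alpha_j e^{-(\alpha_i y_i+\alpha_j y_j)}$. The forward half of (b) is trivial in both treatments. So the only issue is your converse of (b), and there the proposal has a genuine gap.

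Your diagnosis of the obstacle is accurate: $(Z_i(\infty),Z_j(\infty))$ is not itself an SRBM, and the law identity of part (a) is available only once full product form is known. (This is precisely the step that the paper's one-sentence proof of (b) --- ``immediate from (a) and \lem{pmatrix}'' --- leaves implicit, since it in effect treats ``$Z_i(\infty)$ and $Z_j(\infty)$ are independent'' and ``the $(i,j)$ pair SRBM has product form'' as interchangeable.) But the repair you propose would fail. Setting $\theta_k=0$ for $k\notin\{i,j\}$ in \eq{key 1} gives
\[
\gamma(\theta_{\{i,j\}})\,\varphi(\theta_{\{i,j\}})=\sum_{k=1}^d\gamma_k(\theta_{\{i,j\}})\,\varphi_k(\theta_{\{i,j\}}),
\]
and the sum on the right runs over \emph{all} $k\in J$: for $k\notin\{i,j\}$ the transform $\varphi_k(\theta_{\{i,j\}})$ is an unknown function of \emph{both} $\theta_i$ and $\theta_j$, because $\nu_k$ is supported on $\{x_k=0\}$ and hence $\varphi_k$ is free of $\theta_k$ only. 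Pairwise independence lets you write $\varphi(\theta_{\{i,j\}})=g_i(\theta_i)g_j(\theta_j)$, but $g_i,g_j$ are likewise unknown functions --- exponentiality of the marginals is itself a consequence of product form (Harrison--Williams), not a hypothesis you may use here. So the restricted BAR is a single identity among $d+2$ unknown analytic functions multiplied by polynomials, and there is no ``coefficient of $\theta_i\theta_j$'' to isolate; coefficient comparison is legitimate only in \eq{characterization}, where both sides are polynomials, and \eq{characterization} is exactly what \lem{characterization} provides \emph{under product form}, i.e., under the conclusion you are trying to reach. As described, the plan is circular. (The downstream step --- assembling the pairwise relations \eq{ssc} into the full skew-symmetry condition --- would be unproblematic; the failure is upstream, in deriving a single pairwise relation from pairwise independence.) A correct completion must either justify the identification the paper relies on, namely that independence of $Z_i(\infty)$ and $Z_j(\infty)$ forces product form of the corresponding pair SRBM, or else exploit the BAR on all of $\R^d$ rather than only on the coordinate planes.
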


\begin{proof}[Proof of Corollary \ref{cor:main}]

We first prove the necessity. Assume $i<j$. Since $R$ is a $P$-matrix, $R^{-1}$ is a $P$-matrix by \lem{pmatrix}. This implies that $A$ is a $P$-matrix, and therefore $A^{ij}$ is a $P$-matrix.

 Recall that 
$\theta^{ij(i,\ray)}_i=\theta^{(i,\ray)}_i=\tau_i$ and 
$\theta^{ij(j,\ray)}_j=\theta^{(j,\ray)}_j=\tau_j$.
Thus, (\ref{eq:identicalSymmetry}) is equivalent to 
\begin{eqnarray}
  \label{eq:thetataui}
&&  \theta^{ij(i,\ray)} = f^{ij}(\tau^{ij}), \\
  \label{eq:thetatauj}
&&  \theta^{ij(j,\ray)} = f^{ij}(\tau^{ij}).
\end{eqnarray}
Assume the product form stationary
distribution. Then, by part (a) of \thr{main},
(\ref{eq:identicalSymmetry}) holds. As a consequence,  both
(\ref{eq:thetataui}) and (\ref{eq:thetatauj}) hold.
Since $\theta^{ij(i,\ray)}$ is on the ellipse, (\ref{eq:thetataui})
implies  (\ref{eq:tauijongamma}).  We now prove (\ref{eq:thetaithetamax})
must hold. Assume that 
\begin{equation}
  \label{eq:2}
\theta^{(i,\ray)}_i \neq
 \argmax\{\theta_i: \theta\in E\cap \Gamma_{\{i,j\}}\}.
\end{equation}
Suppose on the contrary  that $\theta^{(i,\ray)}_j=\tau_j$. This implies
that $\theta^{ij(i,\ray)}=\theta^{(i,\ray)}$, which contradicts the
condition $\theta^{ij(i,\ray)}\neq\theta^{(i,\ray)}$ in the definition
of (\ref{eq:tildetheta}). Similarly, we can prove
(\ref{eq:thetajthetamax}) holds. This proves the necessity.

Now we prove the sufficiency. Let $i,j\in J$ with $i<j$.  Assume $A^{ij}$ is
a ${\cal P}$-matrix and  
(\ref{eq:tauijongamma})-(\ref{eq:thetajthetamax}) hold. Then
(\ref{eq:tauijongamma}) and (\ref{eq:thetaithetamax}) imply
(\ref{eq:thetataui}), and 
(\ref{eq:tauijongamma}) and (\ref{eq:thetajthetamax}) imply
(\ref{eq:thetatauj}). Thus, (\ref{eq:identicalSymmetry}) holds. 
It follows from part (b) of \thr{main} that the SRBM has a product
form stationary distribution. 
\end{proof}

\begin{remark}
In the two-dimensional case, when $\tau_1\neq \theta^{(2,\ray)}_1$ and
$\tau_2\neq \theta^{(1,\ray)}_2$, the SRBM has a product form
stationary distribution if and only if 
the point $\tau$ is on the ellipse, i.e.,
$\gamma(\tau)=0$. Example~\ref{exa:non-product form} in Appendix \ref{sec:Examples}
shows that when $d\ge 3$, the condition $\gamma(\tau)=0$ is not
sufficient for a product 
form stationary distribution. 
\end{remark}

We end this section by stating a lemma that will be needed in the
proof of \thr{main}, and proved in Appendix \ref{sec:proofs-lemmas}. To state the following lemma, 
for $z^{ij}=(z_i, z_j)^T\in \R^2$, 
let
\begin{equation}
  \label{eq:gammaij}
  \tilde{\gamma}^{ij}(z^{ij}) = - \frac 12 \brb{z^{ij}, \tilde{\Sigma}^{ij} z^{ij}} - \brb{\tilde{\mu}^{ij}, z^{ij}}.
\end{equation}


\begin{lemma}
  \label{lem:gammaijgamma}
For $z^{ij}=(z_i, z_j)^T\in \R^2$,  $\tilde \gamma^{ij}(z^{ij})=\gamma(f^{ij}(z^{ij}))$.
\end{lemma}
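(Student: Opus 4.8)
The claim is a pure identity relating the two-dimensional quadratic $\tilde\gamma^{ij}$ to the restriction of $\gamma$ to the hyperplane $\Gamma_{\{i,j\}}$ via the parametrization $f^{ij}$. My plan is to verify it by direct substitution, tracking how the data $(\tilde\Sigma^{ij},\tilde\mu^{ij})$ were constructed so that the change of variables $z^{ij}\mapsto f^{ij}(z^{ij})$ absorbs exactly the right factors. The key structural fact, from \eq{fij} and the definitions \eq{A}--\eq{cij}, is that $f^{ij}$ is linear in $z^{ij}$ and can be written as $f^{ij}(z^{ij}) = A\,g$, where $g\in\R^d$ is supported on coordinates $i,j$ with $g^{ij}=(A^{ij})^{-1}z^{ij}$; this is because $f^{ij}(z^{ij})$ is the linear combination of $\theta^{(i,\ray)}=A^{(i)}$ and $\theta^{(j,\ray)}=A^{(j)}$ whose $i$th and $j$th coordinates equal $z_i$ and $z_j$, and the coefficient vector solving $A^{ij}(\text{coeffs})=z^{ij}$ is precisely $(A^{ij})^{-1}z^{ij}$.

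With that representation in hand, the computation is mechanical. First I would expand $\gamma(f^{ij}(z^{ij}))$ using its definition $\gamma(\theta)=-\tfrac12\br{\theta,\Sigma\theta}-\br{\mu,\theta}$. Substituting $\theta = f^{ij}(z^{ij})$ and writing $\theta = A\,g$ with $g^{ij}=(A^{ij})^{-1}z^{ij}$, the quadratic term becomes $-\tfrac12\br{A g, \Sigma A g} = -\tfrac12 \br{g, A^{\rs{t}}\Sigma A\, g} = -\tfrac12\br{g,\Sigma^* g}$ by \eq{Sigmastar}, and the linear term becomes $-\br{\mu, A g} = -\br{A^{\rs{t}}\mu, g} = -\br{\mu^*, g}$. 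Since $g$ is supported only on coordinates $i,j$, both inner products collapse to their $(i,j)$-blocks: $\br{g,\Sigma^* g} = \br{g^{ij},(\Sigma^*)^{ij} g^{ij}}$ and $\br{\mu^*,g}=\br{(\mu^*)^{ij}, g^{ij}}$.

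Finally I would substitute $g^{ij}=(A^{ij})^{-1}z^{ij}$ and compare with \eq{gammaij}. The quadratic term gives $-\tfrac12\br{(A^{ij})^{-1}z^{ij},(\Sigma^*)^{ij}(A^{ij})^{-1}z^{ij}} = -\tfrac12\br{z^{ij},((A^{ij})^{\rs{t}})^{-1}(\Sigma^*)^{ij}(A^{ij})^{-1}z^{ij}}$, which by the definition \eq{2d Sigma} of $\tilde\Sigma^{ij}$ is exactly $-\tfrac12\br{z^{ij},\tilde\Sigma^{ij}z^{ij}}$; the linear term similarly yields $-\br{((A^{ij})^{\rs{t}})^{-1}(\mu^*)^{ij},z^{ij}} = -\br{\tilde\mu^{ij},z^{ij}}$. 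This matches $\tilde\gamma^{ij}(z^{ij})$ term by term, completing the identity. The only genuine point requiring care — and the one place I expect a reader to stumble — is the justification that $f^{ij}(z^{ij}) = A g$ with $g$ supported on $\{i,j\}$, i.e. that the two surviving coefficients are $(A^{ij})^{-1}z^{ij}$ and the projection onto the $(i,j)$-block is legitimate; this rests on $c_{ij}=\det(A^{ij})\neq 0$ so that $(A^{ij})^{-1}$ exists, which is guaranteed under the hypotheses by \lem{pmatrix}. Everything else is bookkeeping with the definitions in \eq{Sigmastar} and \eq{2d Sigma}.
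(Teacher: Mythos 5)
Your proposal is correct and follows essentially the same route as the paper's own proof: both rest on the observation that $f^{ij}(z^{ij}) = Ay$ for the $d$-vector $y$ supported on coordinates $\{i,j\}$ with $y^{ij}=(A^{ij})^{-1}z^{ij}$, after which the identity follows by unfolding the definitions of $\Sigma^*$, $\mu^*$, $\tilde\Sigma^{ij}$, and $\tilde\mu^{ij}$ and moving $A$ across the inner products. The only difference is that you run the chain of equalities from $\gamma(f^{ij}(z^{ij}))$ to $\tilde\gamma^{ij}(z^{ij})$ while the paper runs it in the opposite direction, which is immaterial.
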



\section{Proof of  \thr{main}}
  \label{sect:proof}

We first prove the necessity in (a) of \thr{main}. Assume that the $(\Sigma, \mu,
R)$-SRBM has a product form stationary distribution. Therefore, for
some $\alpha\in \R^d$ with $\alpha>0$, 
(\ref{eq:characterization}) holds for every $\theta \in \R^d$.

By part (c) of \lem{pmatrix}, $R$ is a ${\cal P}$-matrix. We
now prove (\ref{eq:identicalSymmetry}). 
By Lemma~\ref{lem:hyper-2d}, it suffices to prove that
 for any
$1\le i< j \le d$, 
\begin{equation}
  \label{eq:tildethetaiequalj}
   \theta^{ij(i,\ray)}_i=   \theta^{ij(j,\ray)}_i =
  \alpha_i, \quad \text{ and } \quad 
   \theta^{ij(i,\ray)}_j= \theta^{ij(j,\ray)}_j = \alpha_j,
\end{equation}
 where the condition $c_{ij} \ne 0$ is satisfied by \lem{pmatrix}.
  
To prove (\ref{eq:tildethetaiequalj}), we follow the derivation from
(5.17) to (5.21) of \cite{DaiMiya2013}. 
Observe that $\gamma(\theta^{(i,\ray)})=0$, $\gamma_k(\theta^{(i,\ray)})=0$
for $k\in J\setminus\{i\}$, and $\gamma_i(\theta^{(i,\ray)})\neq 0$;
the latter holds because $R$ is assumed to be invertible in
(\ref{eq:stability}). Plugging 
$\theta=\theta^{(i,\ray)}$  into (\ref{eq:characterization}), we have 
$\alpha_i=\theta^{(i,\ray)}_i$.
Thus,  by the definition of 
$ \theta^{ij(i,\ray)}$, we have 
$ \theta^{ij(i,\ray)}_i=\theta^{(i,\ray)}_i=\alpha_i$. 
We now show that
\begin{equation}
  \label{eq:alphaj}
    \theta^{ij(i,\ray)}_j=\alpha_j.
\end{equation}
To see this, by the definition of $ \theta^{ij(i,\ray)}$, we
have $\gamma( 
\theta^{ij(i,\ray)})=0$, $\gamma_k(
\theta^{ij(i,\ray)})=0$ for $k\in J\setminus\{i,j\}$.
If $ \theta^{ij(i,\ray)} \neq \theta^{(i,\ray)}$, we have $\gamma_j(
\theta^{ij(i,\ray)})\neq 0$.
 Plugging $\theta=
\theta^{ij(i,\ray)}$ into (\ref{eq:characterization}), we have 
\begin{displaymath}
  \gamma_j(
\theta^{ij(i,\ray)})(\alpha_j - 
\theta^{ij(i,\ray)}_j)=0,
\end{displaymath}
from which we conclude that (\ref{eq:alphaj}) holds.
If $ \theta^{ij(i,\ray)} = \theta^{(i,\ray)}$, then
(\ref{eq:extremeCondition}) holds. According to Lemma \ref{lem:gammaijgamma}, $\theta^{(i,\ray)}_i =\argmax\{ z_i: z^{ij}=(z_i, z_j)^T\in \R^2, \tilde
  \gamma^{ij}(z^{ij})=0\}$. At the same time, if (\ref{eq:characterization}) holds, then plugging the definition of $f^{ij}(z^{ij})$ in (\ref{eq:fij}) into (\ref{eq:characterization}), we have
\begin{eqnarray}
\label{eq:gammaijcharacterization}
\lefteqn{ \tilde \gamma^{ij}(z^{ij})=\gamma(f^{ij}(z^{ij}))=C_i\Delta_ic_{ij}^{-1}(\theta^{(j,\ray)}_jz_i-\theta^{(j,\ray)}_iz_j)(\alpha_i-z_i)} \hspace{10ex} \nonumber\\
&& +C_j\Delta_jc_{ij}^{-1}(-\theta^{(i,\ray)}_jz_i+\theta^{(i,\ray)}_iz_j)(\alpha_j-z_j).
 \end{eqnarray}

Taking derivative in the both sides of (\ref{eq:gammaijcharacterization})
with respect to $z_j$, and plugging $(\theta^{ij(i,\ray)}_i, \theta^{ij(i,\ray)}_j)^T$
into the new equation, we again conclude that (\ref{eq:alphaj})
holds as 
\begin{displaymath}
\frac{  \partial \tilde \gamma^{ij}(z_{ij})}{\partial z_j}
|_{z^{ij}=(\theta^{ij(i,\ray)}_i,\theta^{ij(i,\ray)}_j)^T}=C_j\Delta_jc_{ij}^{-1}\theta^{(i,\ray)}_{i}(\alpha_j-\theta^{ij(i,\ray)}_j)
=0.
\end{displaymath}
 Similarly, we can show that 
\begin{displaymath}
   \theta^{ij(j,\ray)}_i = \alpha_i, \quad \text{ and } \quad
 \theta^{ij(j,\ray)}_j = \alpha_j,
\end{displaymath}
thus proving (\ref{eq:tildethetaiequalj}). This concludes the necessity proof.

We note that the sufficiency of (a) in \thr{main} is immediate from (b) and \lem{ray} because $R^{-1}$ is $\sr{P}$-matrix by (a) of \lem{pmatrix}. Thus, it remains only to prove (b). For any fixed pair $1\le i< j\le d$, assume that $A^{ij}$ is a ${\cal P}$-matrix and (\ref{eq:identicalSymmetry}) holds.
 Using Theorem 5.1 of 
\cite{DaiMiya2013}, we would like to  conclude that two-dimensional
 $(\tilde\Sigma^{ij}, \tilde \mu^{ij}, \tilde
R^{ij})$-SRBM has a product form stationary distribution.

Now we prove that (\ref{eq:identicalSymmetry}) implies condition (5.2) in
Theorem 5.1 of \cite{DaiMiya2013}.
 For this,
we define the geometric objects associated with  
the two-dimensional $(\tilde\Sigma^{ij}, \tilde \mu^{ij}, \tilde
R^{ij})$-SRBM. Recall the definition of $\tilde \gamma^{ij}(z^{ij})$ in
(\ref{eq:gammaij}).
  Then, $\tilde{\gamma}^{ij}(z^{ij}) = 0$ defines an ellipse in
  $\R^2$. 
Let 
\begin{equation}
  \label{eq:gammaiji}
\tilde{\gamma}^{ij}_{k}(z^{ij}) = \brb{z^{ij},
  (\tilde{R}^{ij})^{(k)}}, \quad k =i, j,
\end{equation}
where $ (\tilde{R}^{ij})^{(k)}$ is the $k$th column of ${\tilde R}^{ij}$.
Then $\tilde{\gamma}^{ij}_{k}(z^{ij}) = 0$  defines
  a line in $\R^2$ for $k=1,2$.
We next find the non-zero intersection points of the ellipse
$\tilde{\gamma}(z^{ij}) = 0$ and the lines
$\tilde{\gamma}^{ij}_{i}(z^{ij}) = 0$ and
$\tilde{\gamma}^{ij}_{j}(z^{ij}) = 0$, respectively, on
$\dd{R}^{2}$. 
By (\ref{eq:fij}), we have
\begin{eqnarray*}
f^{ij} (\theta^{(i,\ray)}_{i},
  \theta^{(i,\ray)}_{j}) = \theta^{(i,\ray)}, \quad \text{ and } \quad
 f^{ij}(\theta^{(j,\ray)}_{i}, \theta^{(j,\ray)}_{j})=\theta^{(j,\ray)}.
\end{eqnarray*}
Therefore, we can use Lemma~\ref{lem:gammaijgamma} 
and expressions \begin{eqnarray*}
  \tilde{\gamma}^{ij}_{i}(z^{ij}) = c_{ij}^{-1} \Delta_{i} (z_{i}
  \theta^{(j,\ray)}_{j} - z_{j} \theta^{(j,\ray)}_{i}) \quad \text{ and } \quad
  \tilde{\gamma}^{ij}_{j}(z^{ij}) = c_{ij}^{-1} \Delta_{j} ( - z_{i}
  \theta^{(i,\ray)}_{j} + z_{j} \theta^{(i,\ray)}_{i}) 
\end{eqnarray*}
to  verify that  these intersection points are given by
\begin{eqnarray*}
 (\theta^{(i,\ray)}_{i},
  \theta^{(i,\ray)}_{j})^T, \quad \text{ and } \quad
 (\theta^{(j,\ray)}_{i}, \theta^{(j,\ray)}_{j})^T.
\end{eqnarray*}
Define 
\begin{eqnarray*}
  \tilde\theta^{ij(i,\ray)} = ( \theta^{ij(i,\ray)}_{i},
   \theta^{ij(i,\ray)}_{j})^T, \qquad \theta^{*ij(j,\ray)} =
  ( \theta^{ij(j,\ray)}_{i},  \theta^{ij(j,\ray)}_{j})^T.
\end{eqnarray*}
Then $f^{ij}(  \tilde\theta^{ij(i,\ray)})= \theta^{ij(i,\ray)}$.
By Lemma \ref{lem:gammaijgamma},  we have
$\tilde \gamma^{ij}(\tilde \theta^{ij(i,\ray)})=
\gamma( \theta^{ij(i,\ray)})=0$,  where
the latter equality 
follows from the definition of $ \theta^{ij(i,\ray)}$. It
follows from Lemma \ref{lem:gammaijgamma} that
(\ref{eq:extremeCondition}) holds if and 
only if 
\begin{equation}
  \label{eq:twodimthetamax}
  \theta^{(i,\ray)}_i =\argmax\{ z_i: z^{ij}=(z_i, z_j)^T\in \R^2, \tilde
  \gamma^{ij}(z^{ij})=0\}. 
\end{equation}
Therefore, we have $  \tilde\theta^{ij(i,\ray)}_j=\theta^{(i,\ray)}_j$
if and only if (\ref{eq:twodimthetamax}) holds. Thus, we have proved
that $ \tilde\theta^{ij(i,\ray)}$   is the symmetric point  
of $\bigl(\theta^{(i,\ray)}_i, \theta^{(i,\ray)}_j\bigr)^T$ on $\tilde \gamma^{ij}(z^{ij})=0$. Similarly, 
we can verify that $ \tilde\theta^{ij(j,\ray)}$   is the symmetric point  
of $\bigl(\theta^{(j,\ray)}_i, \theta^{(j,\ray)}_j\bigr)^T$   on
$\tilde \gamma^{ij}(z^{ij})=0$. Condition (\ref{eq:identicalSymmetry})
implies that 
\begin{displaymath}
  \tilde \theta^{ij(i,\ray)} = \tilde \theta^{ij(j,\ray)}.
\end{displaymath}
Thus,  Condition (5.2)
of \cite{DaiMiya2013} is satisfied for the  
$(\tilde
\Sigma^{ij}, \tilde \mu^{ij}, \tilde R^{ij})$-SRBM.

 It follows that
 the two-dimensional $(\tilde
\Sigma^{ij}, \tilde \mu^{ij}, \tilde R^{ij})$-SRBM has a product form
stationary distribution. Furthermore, it follows from (5.28) in the
proof of Theorem 5.1 in  \cite{DaiMiya2013},   
there exist two  constants $d^{(ij)}_i>0$ and $d^{(ij)}_j>0$ such that
for any $z^{ij}\in \R^2$
\begin{eqnarray*}
  \lefteqn{\tilde{\gamma}^{ij}(z_{i}, z_{j}) = d^{(ij)}_i \Delta_{i} c_{ij}^{-1} (\theta^{(j,\ray)}_{j} z_{i} - \theta^{(j,\ray)}_{i} z_{j}) (\alpha_{i} - z_{i})} \hspace{10ex} \nonumber\\
  && + d^{(ij)}_j \Delta_{j} c_{ij}^{-1}(-\theta^{(i,\ray)}_{j} z_{i} + \theta^{(i,\ray)}_{i} z_{j}) (\alpha_{j} - z_{j}),
\end{eqnarray*}
where $\alpha_{k} = \theta^{(k,\ray)}_{k} > 0$ for $k=i,j$. 
For $y^{ij}=(y_i, y_j)^T\in \R^2$, by letting $z^{ij} = A^{ij} y^{ij}$, we get
\begin{eqnarray}
\label{eq:characterization-1}
- \frac 12 \brb{ y_{ij},
{\Sigma^*}^{ij} y^{ij}} -
\brb{{\mu^*}^{ij},  y^{ij}}
   = \sum_{k=i,j}  d^{(ij)}_k\Delta_{k} y_{k} \left(\alpha_{k} - (\theta^{(i,\ray)}_{k} y_{i} + \theta^{(j,\ray)}_{k} y_{j}) \right).
\end{eqnarray}
Comparing the coefficients of $y_i$ on the both sides of
(\ref{eq:characterization-1}), we have 
\begin{displaymath}
 d^{(ij)}_i  \Delta_i \alpha_i = -\mu^*_i = - {\mu^*}^{ij}_i,
\end{displaymath}
from which we conclude that $d^{(ij)}_i$ is independent of $j$, and we
denote it by $d_i$.  Then \eq{characterization-1} becomes
\begin{eqnarray}
\label{eq:characterization-2}
- \frac 12 \brb{ y^{ij},
{\Sigma^*}^{ij} y^{ij}} -
\brb{{\mu^*}^{ij},  y^{ij}}
= \sum_{k=i,j}  d_k\Delta_{k} y_{k} \left(\alpha_{k} - (\theta^{(i,\ray)}_{k} y_{i} + \theta^{(j,\ray)}_{k} y_{j}) \right),
\end{eqnarray}
from which we have 
\begin{equation}
  \label{eq:Sigmastarexpre}
{\Sigma^*}_{ij} = \frac{1}{2}\Bigl (d_i \Delta_i \theta^{(j,\ray)}_i + d_j
\Delta_j \theta^{(i,\ray)}_j\Bigr) \quad  \text{and} \quad {\mu^*}_i=
- d_i \Delta_i \quad \text{ for any } i,j\in J.
\end{equation}
It follows from (\ref{eq:Sigmastarexpre}) that 
\begin{equation}
  \label{eq:characterizationstar}
- \frac 12 \brb{ y,
\Sigma^* } -
\brb{\mu^*,  y}
=\sum_{i=1}^d d_i \Delta_{i} y_{i} \left(\alpha_{i} - \sum_{k=1}^{d}
  \theta^{(k,\ray)}_{i} y_{k} \right) \quad \text{ for any } y \in \dd{R}^{d}.
\end{equation}
Setting $\theta=Ay$, we have
\begin{eqnarray*}
\gamma(\theta)=\sum_{i=1}^d{d_i\gamma_{i}(\theta)(\alpha_{i}-\theta_{i})}
\quad \text{ for any } \theta\in\R^d.
\end{eqnarray*}
Thus,
\eq{characterization} holds. It follows from \lem{characterization}
that the $d$-dimensional $(\Sigma, \mu, R)$-SRBM has a product form
stationary distribution.  This completes the proof of \thr{main}. 

\begin{proof}[Proof of Corollary \ref{cor:2}]
(a) The if and only if part is immediate from \cor{main} because we can see in the proof of \thr{main} that the conditions  \eq{tauijongamma}, \eq{thetaithetamax} and \eq{thetajthetamax} are equivalent for the two dimensional $(\tilde{\Sigma}^{ij}, \tilde{\mu}^{ij}, \tilde{R}^{ij})$-SRBM to have a product form stationary distribution. Thus, we only need to prove that, if the $d$-dimensional $(\Sigma, \mu, R)$-SRBM has a product form stationary distribution $Z(\infty)$, then
the density function of
$(Z_i(\infty), Z_j(\infty))$ is equal to  
\begin{displaymath}
\alpha_i\alpha_j
e^{-(\alpha_i y_i+\alpha_j y_j)}.  
\end{displaymath}
 In the proof of \thr{main}, we know
that when
(\ref{eq:characterization}) holds for every $\theta \in \R^d$, then
\eq{gammaijcharacterization} holds. Equation
\eq{gammaijcharacterization} is precisely the two-dimensional analog
of (\ref{eq:characterization}) for the two-dimensional
$(\tilde\Sigma^{ij}, \tilde \mu^{ij}, \tilde
R^{ij})$-SRBM. Therefore, by invoking
Lemma \ref{lem:characterization} again, this time in two dimensions,
we conclude that  
the stationary distribution
of the two-dimensional $(\tilde\Sigma^{ij}, \tilde \mu^{ij}, \tilde
R^{ij})$-SRBM is of product form with density 
$\alpha_i\alpha_j
e^{-(\alpha_i y_i+\alpha_j y_j)}$. This proves (a).  Then (b) is
immediate from (a) and \lem{pmatrix} because $R$ is assumed to be a
$\sr{P}$-matrix.
\end{proof}

\section{Tandem queues and variational problems} 
\label{sect:example}

In this section, we focus on SRBMs that arise from tandem queueing
networks. For such an SRBM, we characterize its product form
stationary distribution through its basic network parameters. We will
also discuss a variational problem (VP) associated with the SRBM.

We assume that the reflection
matrix $R$, the 
covariance matrix $\Sigma$, and the drift vector $\mu$ are given by
\begin{eqnarray}
  \label{eq:tandemRSigma}
 &&  R_{i, i-1} = -1 \quad \text{ and } \quad \Sigma_{i, i-1}= \Sigma_{i-1, i}=
 -c_{i-1}  \quad \text{ for } 
 i =2, \ldots, d, \\ 
 &&  R_{i,i}=1 \quad \text{ and } \quad \Sigma_{i,i} = c_{i-1} + c_i  \quad \text{ for }
 i =1, \ldots, d, \\
 &&   \mu_{i} = \beta_{i-1} - \beta_{i}  \quad \text{ for }  i =1, \ldots, d
\end{eqnarray}  
with all other entries being zero.
An example, when $d=3$, is 
 given by 
\begin{equation}
  \label{eq:Rmu}
  R=
  \begin{pmatrix}
    1 & 0 & 0 \\
   -1 & 1 & 0 \\
   0  & -1 & 1 
  \end{pmatrix}, 
\quad 
  \Sigma =
\begin{pmatrix}
  c_0+c_1 & -c_1 & 0 \\ 
 -c_1 & c_1+c_2  & -c_2\\
  0 & -c_2 & c_2+ c_3 
\end{pmatrix}, \quad 
\mu=
\begin{pmatrix}
  \beta_0-\beta_1 \\ \beta_1-\beta_2 \\ \beta_2-\beta_3
\end{pmatrix}.
\end{equation}
Such an SRBM arises from a $d$-station generalized Jackson network in
series, also known as a tandem queue. In the tandem queue, the
interarrival times are assumed to be iid with mean $1/\beta_{0}$ and squared coefficient of
variation (SCV) $c_0$. The service times at station $i$ are assumed
to be iid with mean $1/\beta_{i}$ and SCV $c_i$, $i\in J$. We assume  that
$\Sigma$ is nonsingular and  condition (\ref{eq:stability}) is satisfied.
It follows from \cite{HarrWill1987} that the SRBM $Z$ has a
unique stationary distribution $\pi$. By using \thr{main}, we first 
 check that the stationary distribution $\pi$
 has a product form  if and only if 
 \begin{equation}
   \label{eq:tandemProductForm}
   c_0=c_i \quad \text{ for } i = 1, \ldots, d-1.
 \end{equation}
To see this, set
\begin{equation}
  \label{eq:stable}
 b = -  R^{-1}\mu >0 \quad \text{ and } \quad
  \tau_i = \frac{2b_i}{c_0+c_i} \quad \text{ for } i =1,\ldots, d.
\end{equation}
We can easily compute that 
  \begin{equation}
    \label{eq:thetaitandm}
     \theta^{(i,\ray)}_{j} =  \tau_{i} \quad \text{ for } j =1, \ldots i \quad \text{ and }
     \quad \theta^{(i,\ray)}_{j}=0 \quad \text{ for } j = i+1, \ldots d.
   \end{equation}
Recall the definition of  $\Sigma^{*}$ and $\mu^{*}$ in (\ref{eq:Sigmastar}).
An easy computation leads to 
\begin{eqnarray*}
  \Sigma^{*}_{i, j}= c_{0} \tau_{i}\tau_{j} , \qquad
  \Sigma^{*}_{i,i} = \tau_{i}^{2}(c_{0} + c_{i}),\qquad
  \mu_i^*=\tau_i (\beta_0-\beta_i), \qquad   
 1\le i\ne j \le d.
\end{eqnarray*}  
Thus, for any $1 \le i<j \le d$, we have
\begin{eqnarray*}
 && \tilde{\Sigma}^{ij} = \left(\begin{array}{cc} c_{0}+c_{i} & -c_{i} \\
  -c_{i} & c_{i}+c_{j} \end{array}\right), \quad
 \tilde{\mu}^{ij} = \left(\begin{array}{cc} \beta_{0}-\beta_{i} \\
     \beta_{i}-\beta_{j} \end{array}\right), \quad
 \tilde{R}^{ij} = \left(\begin{array}{cc} 1 & 0 \\
  -1 & 1 \end{array}\right),
\end{eqnarray*}
where, in the derivation, we have used the following formula for
$A^{ij}$ defined in (\ref{eq:A})
\begin{eqnarray*}
  A^{ij} = \begin{pmatrix}
  \tau_{i} & \tau_{j} \\
  0 & \tau_{j} 
\end{pmatrix}.
\end{eqnarray*}
Because $\tilde R^{ij}$ is an ${\cal M}$-matrix,
 a $(\tilde{\Sigma}^{ij}, \tilde{\mu}^{ij}, 
\tilde{R}^{ij})$-SRBM is well defined and this 
two-dimensional SRBM  corresponds to a
two-station tandem queue consisting of station $i$ and station
$j$ from the original $d$-station network. So \cor{2} concludes that the SRBM from a
$d$-station tandem queue has the product form stationary distribution
if and only if each of the $\frac{1}{2}d(d-1)$ two-dimensional SRBMs
from the two-station queues have product form stationary
distributions. 

By solving equation (\ref{eq:quadraticzj}), we have
the symmetry points of $\theta^{(i,\ray)}$ and $\theta^{(j,\ray)}$:
\begin{eqnarray}
\label{eq:theta}
{\theta}^{ij(i,\ray)}=f^{ij}\left(\tau_{i} ,\,\,
     \frac{2c_{i}\tau_{i}+2b_{j}-2b_{i}}{c_{i}+c_{j}}\right) \quad
 \text{and}\quad
{\theta}^{ij(j,\ray)}=f^{ij}\left( \frac{ 2b_{i}+ c_{i}\tau_{j} -
    c_0\tau_{j}}{c_0+c_{i}},\,\,  \tau_{j}\right), 
\end{eqnarray}
where $f^{ij}$ is again the mapping defined in (\ref{eq:fij}).
Using \eq{theta}, condition (\ref{eq:identicalSymmetry}) is equivalent to
\begin{eqnarray*}
\tau_{i}=\frac{2b_{i}+ c_{i}\tau_{j}- c_0\tau_{j}}{c_0+c_{i}} \quad
\text{and} \quad
\tau_{j}=\frac{2c_{i}\tau_{i}+2b_{j}-2b_{i}}{c_{i}+c_{j}},
\end{eqnarray*}
which is further equivalent to (\ref{eq:tandemProductForm}).
Thus, we have used \thr{main} to prove that the $d$-dimensional stationary
distribution has a product form if and only if 
\begin{eqnarray}
\label{eq:tandem PF 1}
  c_0=c_{i} \quad \mbox{ for } i = 1,2, \ldots, d-1.
\end{eqnarray}
 This fact is well
   known and can be verified by using skew symmetry condition
   (\ref{eq:skew symmetric}) developed in \cite{HarrWill1987a}.

Recall the variational problem (VP) defined in Definition 2.3 of
\cite{AvraDaiHase2001}. The VP is proved to be related to 
large deviations rate function of the corresponding SRBM; see, for
example, \cite{Maje1996}. 
 In the two-dimensional case, the VP is 
solved completely in \cite{AvraDaiHase2001}, whose optimal solutions
are interpreted geometrically in \cite{DaiMiya2013}. In particular,
for the two-dimensional  $(\tilde{\Sigma}^{ij}, \tilde{\mu}^{ij},
\tilde{R}^{ij})$-SRBM,  the ``entrance'' velocities in (3.4) of
\cite{AvraDaiHase2001} 
are given by
\begin{eqnarray}
\label{eq:2d verocity}
\tilde a^{ij(i,\ray)} = \tilde \Sigma^{ij}
\tilde \theta^{ij(i,\ray)} + \tilde \mu^{ij} \quad \text{ and }  \quad
\tilde a^{ij(j,\ray)} = \tilde \Sigma^{ij}
\tilde \theta^{ij(j,\ray)} + \tilde \mu^{ij},
\end{eqnarray}
where $\tilde \theta^{ij(i,\ray)}$ is the two-dimensional vector whose 
components are the $i$th and $j$th component of $\theta^{ij(i,\ray)}$,
and $\tilde \theta^{ij(j,\ray)}$ is defined similarly.
These velocities indicate influence of the boundary faces on an optimal path, that is, a sample path for the optimal solution of the VP. See Section 4 of \cite{DaiMiya2013}. 

Assume the product form condition (\ref{eq:tandemProductForm}).  Under condition
(\ref{eq:tandemProductForm}), $\tau_i$ has the simplified
expression:
\begin{eqnarray}
\label{eqn:v pf}
  \tau_{i} = \frac 1{c_{0}} (\beta_{i} - \beta_{0}), \quad i=1,2,\ldots, d-1, \quad \mbox{ and } \quad \tau_{d} = \frac 2{c_{0}+c_{d}} (\beta_{d} - \beta_{0})
\end{eqnarray}
and the symmetry points are given by
\begin{eqnarray}
\label{eq:theta v}
 {\tilde \theta}^{ij(i,\ray)} = \tilde \theta^{ij(j,\ray)}
= (\tau_i, \tau_j)^T.  
\end{eqnarray}
Therefore, it follows from \eq{2d verocity} that, for $1 \le i < j \le d-1$,
\begin{eqnarray}
  \label{eq:ij velocity 1}
 \tilde{a}^{\{i,j\}} \equiv \tilde a^{ij(i,\ray)} =
\tilde a^{ij(j,\ray)} = c_{0}
\begin{pmatrix}
  \tau_{i} - \tau_{j} \\ 
  \tau_{j}
\end{pmatrix}
=
\begin{pmatrix}
\beta_i -\beta_j \\
 \beta_j  -\beta_0
\end{pmatrix}, 
\end{eqnarray}
  and, for $1 \le i \le d-1$,
\begin{eqnarray}
  \label{eq:ij velocity 2}
 \tilde{a}^{\{i,d\}} \equiv \tilde a^{id(i,\ray)} =
\tilde a^{id(d,\ray)} = 
\begin{pmatrix}
  c_{0}(\tau_{i} - \tau_{d}) \\  \displaystyle
  \frac 12 (c_{0} + c_{d}) \tau_{d}
\end{pmatrix}
=
\begin{pmatrix} \displaystyle
\beta_i - \frac {2c_{0} \beta_d + (c_{d} - c_{0}) \beta_{0})} {c_{0} + c_{d}} \\
 \beta_d  -\beta_0
\end{pmatrix}.
\end{eqnarray}

We now consider the optimal path for the VP for the product form network. To make arguments simplified, we consider the case for $d=3$ and assume that $c_{0} = c_{3}$ in addition to the product form condition \eq{tandem PF 1}. Then, we have \eq{ij velocity 1} for $1 \le i < j \le d = 3$.

Analogously to the two dimensional case in \cite{DaiMiya2013}, let us
consider a normal vector at a point $\theta$ on  the ellipse $E$.
 Denote this normal vector by $n^{J}(\theta)$. Then
it is easy to see that 
\begin{eqnarray*}
  n^{J}(\theta) = \Sigma \theta + \mu,
\end{eqnarray*}
which is denoted by $n^{\Gamma}(\theta)$ in (3.16) of
\cite{DaiMiya2013}. We conjecture $n^{J}(\tau)$ is the ``entrance
velocity'' for the last  segment of an optimal path from origin to a
point $z\in S$.  This conjecture is consistent with the result in the two
dimensional case; see Figure 3 of \cite{AvraDaiHase2001}.
Let $\tilde{a}^{J}=n^{J}(\tau)$. Then we have 
\begin{eqnarray*}
  \tilde{a}^{J} = n^{J}(\tau) = 
 \begin{pmatrix}
  \beta_{1} - \beta_{2} \\
  \beta_{2} - \beta_{3} \\
  \beta_{3} - \beta_{0}
\end{pmatrix}.
\end{eqnarray*}
Combining this $\tilde{a}^{J}$ with the two dimensional velocities
$\tilde{a}^{\{i,j\}}$, we can guess the optimal path for the
three-dimensional VP. 

To see this, let us consider the case that 
\begin{equation}
  \label{eq:3}
\beta_{1} < \beta_{2} <
\beta_{3}.  
\end{equation}
In this case, the first two components of $\tilde{a}^{J}$ are
negative, and the third component is positive. This suggests that the
final segment of the optimal path to a point $z \in S \equiv
\dd{R}_{+}^{3}$ with $z_3>0$ is parallel to $\tilde{a}^{J}$, and is a
straight-line 
from a point $y$ in the interior of the boundary face $F_{3} = \{x  
\in \dd{R}^{3}_+; x_{3} = 0\}$.  The optimal path from origin to $y$
should remain on face 
$F_{3}$ and is obtained using the velocity 
$\tilde{a}^{\{1,2\}}$ as argued in \cite{AvraDaiHase2001}. By \eq{ij
  velocity 1} and assumption (\ref{eq:3}), the first component of
$\tilde{a}^{\{1,2\}}$ 
is negative, and the second component is positive. Hence, the optimal path
in $F_{3}$ has two segments such that the first segment is from
the origin to a point on the first coordinate and the second segment
is from that point to $y$ by a straight-line.  

Thus, we conjecture that the optimal path from origin to $z$ is
composed of three segments whose first segment is on the first
coordinate axis, the second segment is from the end of the first
segment to $y \in F_{3}$, then the final segment is from $y$ to $z \in
S$. The optimality of this path is intuitively appealing because the
first queue is a bottleneck among the three queues and the second
queue is a bottleneck among the latter two queues under assumption
(\ref{eq:3}).

\vspace{5ex}

\appendix

\noindent {\bf \Large Appendix}

\section{The skew symmetric condition} \label{sec:skew}
We will use \lem{characterization} to show that SRBM has a product form stationary distribution of the form in
  (\ref{eq:density1}) if and only if 
 \eq{skew symmetric} holds.

For that, we show that \eq{characterization} holds with $C_{i}=\frac{\Sigma_{ii}}{2R_{ii}}$ for $1 \le i \le d$ and $\alpha$ given by \eq{alpha} is equivalent to \eq{skew symmetric}. As both sides of \eq{characterization}
are quadratic functions of $\theta\in \R^d$, \eq{characterization} holds if and only if
coefficients of  $\theta_{i}\theta_{j}$ and coefficients of
$\theta_{i}$ on the both sides are equal for all $i,j \in J$.  Letting
the coefficient of $\theta_{i}\theta_{j}$ of two sides equal, we
arrive at 
\begin{equation}
\label{eq:ssc}
-\frac{1}{2}(\Sigma_{ij}+\Sigma_{ji})=-C_{i}R_{ji}-C_{j}R_{ij}.
\end{equation}
for $1\le i,j\le d$. Let $j=i$ in \eq{ssc}, we can get $C_{i}=\frac{\Sigma_{ii}}{2R_{ii}}$. Let $i=j$ in \eq{ssc}, we can get $C_{j}=\frac{\Sigma_{jj}}{2R_{jj}}$. Rewrite \eq{ssc} into the matrix form with $C_{i}=\frac{\Sigma_{ii}}{2R_{ii}}$ and $C_{j}=\frac{\Sigma_{jj}}{2R_{jj}}$, we have \eq{skew symmetric}. Letting the coefficient of $\theta_{i}$ of two sides equal, we have
\begin{eqnarray*}
-\mu_{i}=\sum_{k=1}^d\frac{\Sigma_{kk}}{2R_{kk}}R_{ik}\alpha_k.
\end{eqnarray*}
We can also rewrite it into matrix form
\begin{eqnarray*}
\label{eq:mu}
-\mu = 2\diag(\Sigma)\diag(R)^{-1}R\alpha.
\end{eqnarray*}
Solving $\alpha$ from it, we can arrive at \eq{alpha}. 

So if \eq{characterization} holds, then $C_{i}=\frac{\Sigma_{ii}}{2R_{ii}}$ for $1 \le i \le d$ and $\alpha$ must be given by \eq{alpha}. And \eq{skew symmetric} holds. Conversely, if \eq{skew symmetric} holds, $C_{i}=\frac{\Sigma_{ii}}{2R_{ii}}$ for $1 \le i \le d$ and $\alpha$ given by \eq{alpha}, we have \eq{characterization} holds. Through \eq{characterization}, we have now reproduced a
result in \cite{HarrWill1987a}.

\section{Proofs of lemmas} \label{sec:proofs-lemmas}
\begin{proof}[Proof of Lemma~\ref{lem:hyper-2d}]

In the proof, $R^{|ij}$ is the $(d-2)\times
(d-2)$ principal submatrix of $R$ obtained by deleting rows $i$ and
$j$ and columns $i$ and $j$.

We first prove the existence of a map.
Let function $f^{ij}(z^{ij})$ be given in 
\eq{fij}. Clearly, $f^{ij}(z^{ij})\in \Gamma_{\{i,j\}}$ as it is  a linear combination of
$\theta^{(i,\ray)}$ and $\theta^{(j,\ray)}$. We first show that 
for any $\theta\in \Gamma_{\{i,j\}}$, we have $f^{ij}(\theta_i,
\theta_j)=\theta$. To see this, let
$\theta=a\theta^{(i,\ray)}+b\theta^{(j,\ray)}$ for some $a, b\in \R$. Then,
\begin{eqnarray*}
&&  \theta_i = a\theta^{(i,\ray)}_i+b\theta^{(j,\ray)}_i , \\
&&  \theta_j = a\theta^{(i,\ray)}_j+b\theta^{(j,\ray)}_j.
\end{eqnarray*}
Because $c_{ij}=\det(A^{ij})\neq 0$, we have
\begin{displaymath}
  \begin{pmatrix}
    a \\ b 
  \end{pmatrix} = \frac{1}{c_{ij}} 
  \begin{pmatrix}
    \theta^{(i,\ray)}_i \theta_i - \theta^{(j,\ray)}_i \theta_j \\
   -\theta^{(i,\ray)}_j \theta_i + \theta^{(j,\ray)}_j \theta_j 
  \end{pmatrix}.
\end{displaymath}
Using the definition of $f^{ij}$, it is clear that $f^{ij}(\theta_i,
\theta_j)=a\theta^{(i,\ray)}+b\theta^{(j,\ray)}=\theta$. 

To see the uniqueness of map $f^{ij}$, let 
$\theta^1$ and $\theta^2$ be two points on $\Gamma_{\{i,j\}}$. 
Assume that $\theta^1_{i}=\theta^2_{i}=z_i$,
$\theta^1_{j}=\theta^2_{j}=z_j$ for some $z_i$ and $z_j$. 
We now show that $\theta^1 = \theta^2$. To see this, 
let $\theta= \theta^1-
\theta^2$. Then $\langle R^{(k)}, \theta \rangle =0
$ for $k\in J\setminus\{i,j\}$ and $\theta_i=0$ and  $\theta_j=0$. It follows that 
\begin{equation}
  \label{eq:Rsubij}
  (R^{|ij})^{\rs{t}} \theta^{|ij}=0, 
\end{equation}
where $R^{|ij}$ is the $(d-2)\times (d-2)$ principal sub-matrix of $R$
obtained by deleting rows $i$ and $j$ and columns $i$ and $j$ from
$R$, and $\theta^{|ij}$ is the $(d-2)$-dimensional sub-vector
of $\theta$ by deleting components $i$ and $j$ from $
\theta$.  Later on, we will prove $R^{|ij}$ is non-singular. Hence,
(\ref{eq:Rsubij}) implies $\theta^{|ij}=0$, which, together
with $\theta_i=0$ and  $\theta_j=0$, implies $\theta=0$. Thus, we
have proved the claim.

To see the non-singularity of $R^{|ij}$, if not, then there exists $\beta^{|ij} \neq 0$ such that $R^{|ij} \beta^{|ij}=0$. Now let $w=\sum_{l \neq i,j} \beta^{|ij}_{l}R^{(l)}$, we see $w \neq 0$ as $\beta^{|ij} \neq 0$ and $R^{(l)}$ are linearly independent for $l \neq i,j$. On the other hand, we obtained $w_{l}=0$ for $l \neq i,j$ due to $R^{|ij} \beta^{|ij}=0$. Thus, $(w_i, w_j)^{\rs{t}} \neq 0$. Now considering $\langle w, \theta^{(i,\ray)}\rangle$, we get $\langle w, \theta^{(i,\ray)}\rangle=0$ as $\langle R^{(l)}, \theta^{(i,\ray)}\rangle=0$ for $l \neq i,j$. Then we obtain $w_i\theta^{(i,\ray)}_i+w_j\theta^{(i,\ray)}_j=0$. Similarly, we obtain $w_i\theta^{(j,\ray)}_i+w_j\theta^{(j,\ray)}_j=0$. So $A^{ij}$ is singular ($c_{ij}=0$) as $(w_i, w_j)^{\rs{t}} \neq 0$, a contradiction. Thus, we have proved $R^{|ij}$ is non-singular.
\end{proof}

\begin{proof}[Proof of \lem{pmatrix}]
(a) First we quote an equivalent definition of $\mathcal{P}$-matrix in
Section 2.5 of \cite{HornJohn1991}:
\emph{$A$ is a $\mathcal{P}$-matrix, if and only if for each nonzero $x\in \mathbb{R}^{d}$, there is some $k\in\{1,2,\cdots,d\}$ such that $x_{k}(Ax)_{k}>0$.}

For each nonzero $x\in \mathbb{R}^{d}$, $R^{-1}x$ is also nonzero. Since
$R$ is a $\mathcal{P}$-matrix, then there is some
$k\in\{1,2,\cdots,d\}$ such that $(RR^{-1}x)_k(R^{-1}x)_{k}>0$. Now we have
\begin{eqnarray*}
x_{k}(R^{-1}x)_{k}=(RR^{-1}x)_k(R^{-1}x)_{k}>0 .
\end{eqnarray*}
So $R^{-1}$ is also a $\mathcal{P}$-matrix.

(b) Assume that $R$ is a $\mathcal{P}$-matrix, then $R^{-1}$ is a
$\mathcal{P}$-matrix following (a). Thus, for $i\neq j$,
$\det((R^{-1})^{ij})>0$. On the other hand, using the fact that
$R^{-1}=\Delta^{-1} A^T$ we have
$\det((R^{-1})^{ij})=\frac{c_{ij}}{\Delta_{i}\Delta_{j}}$, where
$\Delta_i$ defined in (\ref{eq:Deltai}) is positive because of
(\ref{eq:stability}). Therefore, we have proved that $c_{ij}>0$.

(c) We next assume that the SRBM has a product form stationary distribution, and prove that $R$ is a $\sr{P}$-matrix. For that, we quote another equivalent definition of $\mathcal{P}$-matrix in \cite{HornJohn1991}:
\emph{$A$ is a $\mathcal{P}$-matrix, if and only if for each nonzero $x\in \mathbb{R}^{d}$, there is some positive diagonal matrix $D=D(x)\in M_d(\mathbb{R})$ such that $x^{\rs{T}}(D(x)A)x>0$.}

By Lemma~\ref{lem:characterization}, \eq{characterization}
holds.  Therefore, comparing the coefficients of $\theta_{i} \theta_{j}$, we have for any nonzero $\theta\in \R^d$, 
\begin{eqnarray}
\label{eq:positive definite}
\sum_{i=1}\sum_{j=1} C_i R_{ji} \theta_j \theta_i =
\frac 12 \langle\theta, \Sigma\theta\rangle>0,
\end{eqnarray}
where $C_i$'s are constants in (\ref{eq:characterization}) and the inequality holds because $\Sigma$ is positive definite. It
follows that 
\begin{displaymath}
x^T \diag(C_1, \ldots C_d)^{-1} R x >0
\end{displaymath}
for any nonzero $x\in \R^d$, proving that $R$ is a ${\cal P}$-matrix.
\end{proof}

\begin{proof}[Proof of Lemma \ref{lem:gammaijgamma}]
  One can check that 
\begin{eqnarray}
  \tilde{\gamma}^{ij}(z^{ij}) & = & 
- \frac 12 \brb{(A^{ij})^{-1} z^{ij} ,
  \tilde{\Sigma}^{ij} (A^{ij})^{-1}z^{ij}} -
\brb{\tilde{\mu}^{ij}, (A^{ij})^{-1}z^{ij}} \nonumber \\ 
 & = & 
- \frac 12 \brb{y,
{A^T\Sigma A} y } -
\brb{ A^T\mu, y} \nonumber\\
 & = & 
- \frac 12 \brb{Ay,
\Sigma (A y) } -
\brb{ \mu, Ay}  \nonumber\\
 & = & 
- \frac 12 \brb{f^{ij}(z^{ij}),
\Sigma f^{ij}(z^{ij}) }-
\brb{ \mu, f^{ij}(z^{ij})}  \nonumber  \\
&=& \gamma(f^{ij}(z^{ij})),\label{eq:gammaztheta}
\end{eqnarray}
where $y\in \R^d$ is the unique vector whose components $i$ and $j$
are given by 
$(A^{ij})^{-1} (z_i, z_j)^{\rs{t}}$ and other components are zero, and
in the second last equality we have used the fact that  $Ay=f^{ij}(z^{ij})$.
\end{proof}

\begin{proof}[Proof of lemma \ref{lem:uniqueSymmetry}]
According to Lemma \ref{lem:gammaijgamma}, \eq{quadraticzj} is equivalent to $\tilde \gamma(\theta^{(i,\ray)}_{i}, z_{j})=0$. Also due to Lemma \ref{lem:gammaijgamma}, \eq{extremeCondition} is equivalent to \eq{twodimthetamax}.

If \eq{twodimthetamax} holds, then for any solution $(\theta^{(i,\ray)}_{i}, z_{j})$ satisfying $\tilde \gamma(\theta^{(i,\ray)}_{i}, z_{j})=0$, we have $\frac{  \partial \tilde \gamma^{ij}(z_{ij})}{\partial z_j}|_{z^{ij}=(\theta^{(i,\ray)}_i,z_j)^{\rs{t}}}=-\tilde \Sigma_{jj}z_j-\tilde \Sigma_{ij}\theta^{(i,\ray)}_i-\tilde \mu_{j} =0$. So $\tilde \gamma(\theta^{(i,\ray)}_{i}, z_{j})=0$ has a unique solution $z_{j}=\theta^{(i,\ray)}_{j}$. Otherwise, $\frac{  \partial \tilde \gamma^{ij}(z_{ij})}{\partial z_j}
|_{z^{ij}=(\theta^{(i,\ray)}_i,\theta^{(i,\ray)}_j)^{\rs{t}}}=-\tilde \Sigma_{jj}\theta^{(i,\ray)}_j-\tilde \Sigma_{ij}\theta^{(i,\ray)}_i-\tilde \mu_{j} \neq 0$, so $\theta^{(i,\ray)}_{j} \neq -\frac{\tilde \Sigma_{ij}\theta^{(i,\ray)}_i+\tilde \mu_{j}}{\tilde \Sigma_{jj}}$. From quadratic equation $\tilde \gamma(\theta^{(i,\ray)}_{i}, z_{j})=0$, the other solution $z_j \neq \theta^{(i,\ray)}_{j}$ satisfies $z_{j}+\theta^{(i,\ray)}_{j} = -2\frac{\tilde \Sigma_{ij}\theta^{(i,\ray)}_i+\tilde \mu_{j}}{\tilde \Sigma_{jj}}$. So $z_{j} \neq \theta^{(i,\ray)}_{j}$.
\end{proof}

\section{Examples}
\label{sec:Examples}
Our first example complements Lemma~\ref{lem:pmatrix}.
\begin{example} {\rm
\label{exa:pmatrix}
Let
\begin{eqnarray}
  \label{eq:Rmu 2}
  R=
  \begin{pmatrix}
    1 & 1/2 & 1 & 0 \\
   2 & 1 & 0 & 1\\
   1  & 0 & 1 & 0\\
   0 & 1 & 0 &1 
  \end{pmatrix}.
\end{eqnarray}
Since $R$ is a nonnegative matrix, it is easy to  check that $R$
is  a complete-$\mathcal{S}$ matrix. The matrix 
$R$ in invertible with inverse
\begin{displaymath}
  R^{-1}=  \begin{pmatrix}
    0 & 1/2 & 0 & -1/2 \\
    2 & 0 & -2 & 0 \\
     0 & -1/2 & 1 & 1/2 \\
    -2 & 0 & 2 & 1
  \end{pmatrix}. 
\end{displaymath}
Then condition (\ref{eq:stability}) is satisfied with $\mu=-(1.1,
1.1, 1, 1)^T$. However, $c_{34}$ in (\ref{eq:cij}) equals zero,
demonstrating that 
  Lemma~\ref{lem:pmatrix} cannot be generalized to completely-${\cal
    S}$ matrix satisfying (\ref{eq:stability}).
}\end{example}

The next examples shows that, unlike the case when $d=2$, 
the condition that the point $\tau$, defined in (\ref{eq:1}), is on
the ellipse is not sufficient for a product form stationary
distribution.

\label{app:example}
\begin{example} {\rm
\label{exa:non-product form}
 Consider the 3-dimensional SRBM with
\begin{eqnarray}
  \label{eq:Rmu 1}
  R=
  \begin{pmatrix}
    1 & 0 & 0 \\
   -1 & 1 & 0 \\
   0  & -1 & 1 
  \end{pmatrix}, 
\quad 
  \Sigma =
\begin{pmatrix}
  1 & -1 & 0 \\ 
 -1 & 3  & -2\\
  0 & -2 & 3 
\end{pmatrix}, \quad 
\mu=
\begin{pmatrix}
  -\frac{1}{2} \\ -\frac{3}{2} \\ \frac{3}{2}
\end{pmatrix}.
\end{eqnarray}
Since $R$ is an ${\cal M}$-matrix, $R$ is completely-$S$.
One can verify that
\begin{eqnarray*}
 R^{-1}\mu=
  \begin{pmatrix}
   -\frac{1}{2} \\ -2 \\ - \frac {1}{2}
  \end{pmatrix}< 0.
\end{eqnarray*}

This SRBM arises from a three station tandem queue (see
\sectn{example} for details of this model). A simple computation leads
to $\theta^{(1,\ray)}=(1,0,0)^{\rs{t}}$,
$\theta^{(2,\ray)}=(2,2,0)^{\rs{t}}$ and
$\theta^{(3,\ray)}=(1,1,1)^{\rs{t}}$. Thus, $\tau=(1,2,1)^{\rs{t}}$, where
$\tau_{i}=\theta^{(i,\ray)}_{i}$  for $i=1, 2, 3$
following the definition in (\ref{eq:1})
. One can check that 
$\gamma(\tau) = 0$. Now we use Corollary
\ref{cor:main} to verify that  the SRBM does not have a product form
stationary distribution. For that,  we have
\begin{displaymath}
  \theta^{(12)}=f^{12}(\tau^{12})=
  \begin{pmatrix}
1 \\ 2 \\ 0
  \end{pmatrix}.
\end{displaymath}
Because $\gamma(\theta^{(12)})=-1\neq 0$, by Corollary \ref{cor:main}, 
this SRBM does not have a product form stationary distribution.
}\end{example}

\section{Equivalence of two versions of basic adjoint relationship}
\label{sec:equiv-two-vers}
This section is devoted to the proof for part (b) of Lemma
\ref{lem:key}. The key is to establish the equivalence of two versions
of basic adjoint relationship (BAR). This equivalence is stated in
Proposition \ref{pro:key 1} below.  Since this proposition may be of
independent interest, we keep this appendix as self-contained as
possible. This means 
that some of the terminology and notation are reintroduced here in
this appendix.

\subsection{The  main result}
\label{sect:main}
We focus on a $d$-dimensional semimartingale reflecting Brownian
motion (SRBM) that lives on the nonnegative orthant $\mathbb{R}^d_+$.
The SRBM data consists of a $d\times d$ positive definite matrix
$\Sigma$, a vector $\mu\in \R^d$ and a $d\times d$ reflection matrix
$R$.  The matrix $\Sigma$ is known as the covariance matrix, $\mu$ the
drift vector, and $R$ the reflection matrix. Assume that the SRBM has
a stationary distribution.  It is known that the stationary
distribution is unique and is characterized by a basic adjoint
relationship (BAR) (\cite{DaiKurt1994}). In this appendix, we show that a
moment generating function version of the BAR is equivalent to the
standard BAR in \cite{DaiHarr1992} and \cite{DaiKurt1994}. The
equivalence argument is standard. We present details here for easy
reference. 

Given the primitive data $(\Sigma, \mu, R)$ of an SRBM, we define the
following $d$-dimensional polynomials
\begin{eqnarray*}
 && \gamma(\theta)=-\frac{1}{2}\langle\theta, \Sigma\theta\rangle-\br{\mu, \theta}, \qquad \theta \in \dd{R}^{d},\\
 && \gamma_{i}(\theta) = \brb{R^{(i)}, \theta}, \qquad \theta \in
 \dd{R}^{d}, \quad i \in J=\{1, 2, \ldots, d\},
\end{eqnarray*}
where, for $x, y\in \R^d$, $\langle x, y\rangle$ denotes the standard
inner product of $x$ and $y$, and $R^{(i)}$ denotes the $i$th column
of $R$.
For a finite measure $\tau$  on $(\dd{R}_{+}^{d},
 \sr{B}(\dd{R}_{+}^{d})$ with $\sr{B}(\dd{R}_{+}^{d})$ being the Borel
 $\sigma$-field on $\dd{R}_{+}^{d}$, we define the corresponding
 moment generating function 
 \begin{displaymath}
\varphi_\tau(\theta) = \int_{\R^d_+} e^{\langle \theta,
  x\rangle}\tau(dx) \quad \text{ for } \theta\in \R^d \text{ with }
\theta\le 0.
 \end{displaymath}
Hereafter, vector inequalities are interpreted componentwise.
Because $\tau$ is a finite measure, $\varphi_\tau(\theta)$ is well
defined for each $\theta\le 0$. 
 When the measure $\tau$ is
clear from the context, we sometimes drop the subscript $\tau$ from
$\varphi_\tau$. For an open set $U\subset \R^m$ for some $m\ge 1$, 
a function $f:U\to \R$ is said to be in $C^k(U)$ if $f$ and its
derivatives up to $k$th order are continuous in $U$.
 A function $f:\R^d_+\to \R$ is said to be in
$C^2_b(\R^d_+)$ if (a) for each $x\in \R^d_+$, $f$ is well defined in a
neighborhood $U$ of $x$ in $\R^{d}$ such that $f\in C^2(U)$, and (b)
\begin{equation}
  \label{eq:fnorm}
\norm{f}_{\R^d_+}=\max_{i,j\in J} \sup_{x\in\R^d_+} \left
  \abs{\frac{\partial ^2 }{\partial 
    x_i \partial x_j}f(x)\right}
+ \max_{i\in J} \sup_{x \in\R^d_+} \left \abs{\frac{\partial }{\partial
    x_i}f(x)\right} + \sup_{x\in\R^d_+} \left \abs{f(x)\right}
\end{equation}
is finite.

\begin{proposition}
\label{pro:key 1}
Let $(\Sigma, \mu, R)$ be the data of an SRBM.
Assume that $\pi$ is a probability measure on $\dd{R}^d_+$ and that
$\nu_i$ is a positive finite measure whose support is contained in
$\{x\in\dd{R}^d_+: x_i=0\}$ for $i\in J$. Let $\varphi$ and $\varphi_i$
be the moment generating functions of $\pi$ and $\nu_i$,
respectively. Then $\varphi$, 
$\varphi_1$, $\ldots$, $\varphi_d$ satisfy 
\begin{eqnarray}
    \label{eq:key app1}
    \gamma(\theta) \varphi(\theta)= \sum_{i=1}^d \gamma_{i}(\theta)
    \varphi_{i}\bigl(\theta\bigr) \qquad \text{for each $\theta\in
\dd{R}^d$ with $\theta\le 0$}
\end{eqnarray}
if and only if 
\begin{equation}
\label{eq:bar}
\int_{\dd{R}^{d}_{+}} Lf(x) \pi(dx)+\sum_{i=1}^d\int_{\dd{R}^{d}_{+}
}D_i f(x)v_i(dx)=0 \qquad \text{for each $f \in C^2_b( \dd{R}^d_+)$,}
\end{equation}
where 
\begin{eqnarray*}
Lf(x)&=&\frac{1}{2}\sum_{i=1}^d\sum_{j=1}^d\Sigma_{ij}\frac{\partial^2 f}{\partial x_i \partial x_j}(x)+\sum_{i=1}^d\mu_i \frac{\partial f}{\partial x_i}(x),\\
D_if(x)&=&\sum_{j=1}^d R_{ji}\frac{\partial f}{\partial
  x_j}(x)\quad\text{for}\quad i\in J.
\end{eqnarray*}
\end{proposition}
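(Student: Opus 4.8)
The plan is to test the basic adjoint relationship \eq{bar} against exponential functions and extend by approximation. The computational heart is this: for $f(x)=e^{\br{\theta,x}}$ one has $Lf=-\gamma(\theta)f$ and $D_if=\gamma_i(\theta)f$, because $\frac12\sum_{i,j}\Sigma_{ij}\theta_i\theta_j+\sum_i\mu_i\theta_i=-\gamma(\theta)$ and $\sum_j R_{ji}\theta_j=\br{R^{(i)},\theta}=\gamma_i(\theta)$. Writing $\Lambda(f)=\int_{\R^d_+}Lf\,d\pi+\sum_i\int_{\R^d_+}D_if\,d\nu_i$ for the left-hand side of \eq{bar}, this gives $\Lambda(e^{\br{\theta,\cdot}})=-\gamma(\theta)\varphi(\theta)+\sum_i\gamma_i(\theta)\varphi_i(\theta)$. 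For $\theta\le 0$ the function $e^{\br{\theta,x}}$ and all its derivatives are bounded on $\R^d_+$, so it lies in $C^2_b(\R^d_+)$; hence \eq{bar} applied to this test function yields exactly \eq{key app1}, proving the ``only if'' direction immediately.

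For the converse, \eq{key app1} says precisely that $\Lambda(e^{\br{\theta,\cdot}})=0$ for every real $\theta\le 0$, and the task is to upgrade this to $\Lambda(f)=0$ for all $f\in C^2_b(\R^d_+)$. First I would pass to the imaginary axis. Because $\pi$ and the $\nu_i$ are finite measures supported in $\R^d_+$, the integrands $e^{\br{\theta,x}}$ are bounded by $1$ whenever $\mathrm{Re}\,\theta\le 0$, so $\varphi$ and $\varphi_i$ extend continuously to $\{\theta\in\mathbb{C}^d:\mathrm{Re}\,\theta\le 0\}$ and analytically to its interior. Both sides of \eq{key app1} are then analytic on this tube and agree on the real cube $(-\infty,0)^d$, so by the identity theorem, applied one coordinate at a time, they agree throughout; setting $\theta=i\xi$ gives $\Lambda(e^{i\br{\xi,\cdot}})=-\gamma(i\xi)\varphi(i\xi)+\sum_i\gamma_i(i\xi)\varphi_i(i\xi)=0$ for every $\xi\in\R^d$, where $\varphi(i\xi)$ and $\varphi_i(i\xi)$ are now the characteristic functions of $\pi$ and $\nu_i$.

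Next I would establish $\Lambda(g)=0$ for $g\in C^\infty_c(\R^d)$ by Fourier inversion, and then remove the regularity and support restrictions. Writing $g(x)=(2\pi)^{-d}\int\hat g(\xi)e^{i\br{\xi,x}}\,d\xi$ and differentiating under the integral, $Lg$ and $D_ig$ are the superpositions of $-\gamma(i\xi)e^{i\br{\xi,\cdot}}$ and $\gamma_i(i\xi)e^{i\br{\xi,\cdot}}$; since $\hat g$ is Schwartz while $\abs{\gamma(i\xi)}$ and $\abs{\gamma_i(i\xi)}$ grow only polynomially and the measures are finite, Fubini's theorem gives $\Lambda(g)=(2\pi)^{-d}\int\hat g(\xi)\,\Lambda(e^{i\br{\xi,\cdot}})\,d\xi=0$. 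To reach a general $f\in C^2_b(\R^d_+)$, I would extend it to a $C^2_b(\R^d)$ function (only its values on $\R^d_+$ affect $\Lambda$), cut it off by $\chi(\cdot/R)$ to get compactly supported $f_R$, and mollify to get $f_{R,\varepsilon}\in C^\infty_c$. In each limit the function together with its first and second derivatives converges pointwise on $\R^d_+$ while staying uniformly bounded, so dominated convergence against the finite measures gives $\Lambda(f_{R,\varepsilon})\to\Lambda(f_R)\to\Lambda(f)$; as $\Lambda$ vanishes on every $f_{R,\varepsilon}$, we conclude $\Lambda(f)=0$, i.e.\ \eq{bar}.

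The main obstacle is this final extension step: functions in $C^2_b$ need not decay, so $f$ has no direct Fourier representation, and one must control second derivatives through the cutoff-and-mollify procedure while keeping every term uniformly bounded, so that dominated convergence against $\pi$ and the $\nu_i$ is legitimate. Justifying the analytic continuation to the imaginary axis and the Fubini interchange are the remaining delicate points, but both are routine consequences of the finiteness of the measures and their support in $\R^d_+$.
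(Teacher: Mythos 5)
Your proposal is correct and follows essentially the same route as the paper's proof: exponential test functions $e^{\langle\theta,x\rangle}$ for the forward direction, then (for the converse) analytic continuation of the identity to the tube $\{\mathrm{Re}\,\theta\le 0\}$ and hence to the imaginary axis, Fourier inversion plus Fubini for $C^\infty$ compactly supported test functions, and finally a cutoff-and-mollification reduction with dominated convergence to reach all of $C^2_b(\R^d_+)$. The one point where you gloss and the paper is more careful is your appeal to a global $C^2_b(\R^d)$ extension of $f$: the paper avoids any Whitney-type extension by using that $f$ is, by definition, $C^2$ on a neighborhood of each point of $\R^d_+$, and multiplying by explicit smooth cutoffs $h_1(\abs{x})\prod_i h_2(x_i)$ that vanish once some $x_i\le -3\delta$ (with $\delta$ chosen by compactness on $\{\abs{x}\le B+2\}$), which produces the needed compactly supported $C^2$ function directly.
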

\begin{remark}
Theorem 1.2 of \cite{DaiKurt1994} says if \eq{bar} holds for each $f
\in C^2_b( \dd{R}^d_+)$, then $\pi$ is the stationary distribution of
the SRBM
and $\nu_1$, $\ldots$, $\nu_d$ are the corresponding boundary
measures associated with the SRBM. Combining 
Theorem 1.2 of \cite{DaiKurt1994} with \pro{key 1}, we have proved
Lemma~\ref{lem:key}.

\end{remark}

\subsection{Proof of \pro{key 1}}
\label{sect:proof}

\begin{proof} We first argue that (\ref{eq:bar}) implies (\ref{eq:key
    1}).
Assume that (\ref{eq:bar}) holds for every $f\in C^2_b( \dd{R}^d_+)$.
For a given  $\theta\in \dd{R}^d$ with $\theta \le 0$, let 
\begin{equation}
  \label{eq:2}
  f(x)= e^{\langle \theta, x\rangle} \text{ for }
  x\in \dd{R}^d.
\end{equation}
One can verify that $f\in C^2_b( \dd{R}^d_+)$, $Lf(x)=\gamma(\theta)
e^{\langle \theta, x\rangle} $, and $D_if(x) =\gamma_i(\theta)
e^{\langle \theta, x\rangle}$. Since \eq{bar} holds for this $f$,
\eq{key 1} holds for this $\theta$.

Now we argue that  \eq{key 1} implies (\ref{eq:bar}).
Assume that \eq{key 1} holds.  We would like to prove that \eq{bar}
holds for each $f \in C^2_b( \dd{R}^d_+)$. In this section, we prove
this fact in four steps. Before we present full details of these
four steps,  we first provide an outline of these steps. 

Note that \eq{key 1} implies  \eq{bar} for all
functions $f$ of the form in (\ref{eq:2})
with $\theta\le 0$.
In step 1,
 we argue that \eq{bar} continues
to hold 
for functions $f$ of the form in (\ref{eq:2})  when $\theta$ is
replaced by $(z_1, \ldots, z_d)^T$,
where each $z_j$ is a complex variable with $\Re z_j\le 0$ and the
superscript $T$ 
represents transpose. In step 2,
applying the inverse Fourier theorem for any $f \in C^{\infty}_K(\R^d)$, the
space of
$C^{\infty}$ functions on $\R^d$ with compact support, we argue that \eq{bar} 
holds for each $f \in C^{\infty}_K(\R^d)$.  In step 3, we prove \eq{bar} holds 
for all $C^2_K(\R^d)$ functions.  In step 4, we prove \eq{bar} holds 
for all $C^2_b(\R^d_+)$ functions.   

Before we carry out the details of these four steps, we state a
standard result from complex analysis in the following lemma. The
lemma is used in step 1 below; for its proof, see, for example,
Theorem~1.1 on page 73 of \cite{SteiShak2010}.
\begin{lemma}
\label{lem:analytic}
Let $\Omega\subset\mathbb{C}$ be some connected open subset of the
complex plane $\dd{C}$ and let $f$ be an analytic function defined on
$\Omega$. Suppose that $f(z_0)=0$ for some $z_0\in \Omega$. Then, 
either $f(z)=0$ for all $z\in \Omega$ or there exists a neighborhood
$U\subset \Omega$ of $z_0$ such that $f(z)\neq 0$ for all $z\in
U\setminus \{z_0\}$.
\end{lemma}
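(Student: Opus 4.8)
The plan is to establish the dichotomy by combining a local power-series expansion of $f$ at $z_0$ with a connectedness argument for the globally-vanishing case. First I would use the basic fact from complex analysis that analyticity of $f$ on the open set $\Omega$ yields a convergent power-series representation
\begin{equation*}
  f(z) = \sum_{n=0}^\infty a_n (z-z_0)^n
\end{equation*}
valid on some open disk $D(z_0,r)\subset\Omega$. Since $f(z_0)=0$, the constant term vanishes, $a_0=0$, and the proof then splits according to whether \emph{all} of the coefficients $a_n$ vanish.

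Suppose first that $a_n\neq 0$ for some $n$, and let $m\geq 1$ be the least such index. Then on $D(z_0,r)$ one may factor
\begin{equation*}
  f(z) = (z-z_0)^m g(z), \qquad g(z) = \sum_{k=0}^\infty a_{m+k}(z-z_0)^k,
\end{equation*}
where $g$ is analytic on $D(z_0,r)$ and $g(z_0)=a_m\neq 0$. By continuity of $g$ there is a neighborhood $U\subset D(z_0,r)$ of $z_0$ on which $g$ never vanishes; since $(z-z_0)^m\neq 0$ for $z\neq z_0$, we conclude $f(z)\neq 0$ for every $z\in U\setminus\{z_0\}$, which is the second alternative in the lemma.

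The remaining case is $a_n=0$ for all $n$, so that $f\equiv 0$ on $D(z_0,r)$; here I would promote this local vanishing to all of $\Omega$ using connectedness. Define
\begin{equation*}
  W = \{\, z\in\Omega : f^{(n)}(z)=0 \text{ for all } n\geq 0 \,\}.
\end{equation*}
Then $z_0\in W$ because $f$ vanishes identically near $z_0$, so $W$ is nonempty. The set $W$ is closed in $\Omega$, being the intersection $\bigcap_{n\geq 0}\{z\in\Omega : f^{(n)}(z)=0\}$ of closed sets, as each derivative $f^{(n)}$ is continuous. It is also open: if $z\in W$, then the Taylor series of $f$ about $z$ has all coefficients zero, hence $f\equiv 0$ on a disk around $z$, and every point of that disk again lies in $W$. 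Since $\Omega$ is connected, $W=\Omega$, giving the first alternative $f\equiv 0$ on $\Omega$.

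The argument is otherwise elementary, so the only real substance lies in the input that complex differentiability on $\Omega$ forces local representability by a convergent power series; this is exactly what licenses both the factorization $f(z)=(z-z_0)^m g(z)$ and the passage, at each point of $W$, from the vanishing of all derivatives to local vanishing of $f$. Establishing that representation (via Cauchy's integral formula) would be the main obstacle in a from-scratch development, but it may be taken as standard, and the complete treatment is Theorem~1.1 on page~73 of \cite{SteiShak2010}.
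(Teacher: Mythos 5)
Your proof is correct, and it is precisely the standard argument (order-of-vanishing factorization $f(z)=(z-z_0)^m g(z)$ plus the open-and-closed connectedness argument for the identically-zero case) that the paper relies on: the paper gives no proof of its own, but instead cites Theorem~1.1 on page~73 of Stein and Shakarchi, whose proof is essentially the one you wrote. Nothing is missing; your acknowledgment that the local power-series representation is the only nonelementary input is accurate.
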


{\bf Step 1.}  Let $z=(z_1, \ldots, z_d)^T$ and each $z_j$ be a
complex variable with $\Re z_j\le 0$. Let $f(x)=e^{\langle z,
  x\rangle}$. Define
\begin{displaymath}
h(z) =\int_{\dd{R}^{d}_{+}} Lf(x)
\pi(dx)+\sum_{i=1}^d\int_{\dd{R}^{d}_{+} }D_i f(x)v_i(dx).
\end{displaymath}
Since $Lf(x)=-\gamma(z)f(x)$, $D_if(x)=\gamma_i(z)f(x)$, and $\pi$ and
$\nu_i$ are finite measures, 
one can check that $h(z)$ is well defined and it satisfies
\begin{equation}
    h(z)= -\gamma(z) \varphi(z) + \sum_{i=1}^d \gamma_i(z)
  \varphi_i(z). \label{eq:h1}
\end{equation}

First, we would like to prove $h(z)=0$, where $z=(z_1, \ldots, z_d)^T$ 
and each $z_j$ is a complex variable with $\Re z_j< 0$. 
To see this,  fix
$\theta_j<0$ for $2 \le j \le d$. Let  $U=\{z_1\in \mathbb{C}: \Re
z_1<0\}$. For any $z_1\in U$, define 
\begin{eqnarray*}
g_1(z_1)=h(z_1, \theta_2, \cdots, \theta_d),
\end{eqnarray*}
which, by (\ref{eq:h1}), is equal to 
\begin{eqnarray*}
 -\gamma(z_1, \theta_2, \cdots, \theta_d) \varphi(z_1, \theta_2,
\cdots, \theta_d) + \sum_{i=1}^d \gamma_i(z_1, \theta_2, \cdots, \theta_d)\varphi_i(z_1, \theta_2, \cdots, \theta_d), 
\end{eqnarray*}
where the argument inside functions such as $h(\cdot)$ should have
been the column vector 
$(z_1, \theta_2, \cdots, \theta_d)^T$; for notational simplicity, we
drop the transpose and write  $h(z_1, \theta_2, 
\cdots, \theta_d)$ in the rest of this document.
Clearly, $\gamma(z_1, \theta_2, \cdots, \theta_d)$ 
and  $\gamma_i(z_1, \theta_2, \cdots, \theta_d)$ are analytical
functions of $z_1$ in the entire complex plane $\dd{C}$. Also, one can
check that $\varphi(z_1, \theta_2, \cdots, \theta_d)$ and
$\varphi_i(z_1, \theta_2, \cdots, \theta_d)$ are analytical functions
of $z_1$ on $U$.  From (\ref{eq:key 1}), we know that
$h(\theta)=0$ for $\theta \in \dd R^d$ with $\theta<0$. Therefore,
$g_1(\theta_1)=0$ for $\theta_1\in (-\infty, 0)\subset U$. Applying
Lemma~\ref{lem:analytic}, we have $g_1(z_1)=0$ for $z_1\in U$.
Similarly, by fixing $z_1\in U$, $\theta_3<0$, $\ldots$, $\theta_d<0$, we
can prove $g_2(z_2)=h(z_1, z_2, \theta_3, \ldots, \theta_d)$ is an
analytic function on $U$ and $g_2(\theta_2)=0$ for
$\theta_2\in(-\infty,0)\subset U$. Therefore, again by Lemma
\ref{lem:analytic},  $g_2(z_2)=0$ for $z_2\in U$.
Thus, we have proved that for any $z_i\in \dd{C}$ with $\Re z_i<0$ for
$i=1,2$ and $\theta_i\in(-\infty, 0)$ for $i=3, \ldots, d$,
$h(z_1,z_2,\theta_3, \ldots,\theta_d)=0$. By an induction argument, 
one can prove that $h(z_1, \ldots, z_d)=0$ for $z_i\in\dd{C}$ with
$\Re z_i<0$ for $i\in J$.

Next, we would like to  prove $h(z)=0$, where $z=(z_1, \ldots, z_d)^T$ 
and each $z_j$ is a complex variable with $\Re z_j=0$. We use
an induction argument to prove this. Suppose that $h(z_1, \ldots,
z_{i-1}, z_i, z_{i+1}, \ldots z_d)=0$ for $z_j\in \dd{C}$ with $\Re
z_j=0$ for $j=1, \ldots, i-1$ and $z_j\in \dd{C}$ with $\Re z_j<0$ for
$j=i, \ldots, d$. Fix 
\begin{displaymath}
z=(z_1, \ldots z_{i-1}, z_i, z_{i+1}, \ldots, z_d)^T, 
\end{displaymath}
where $z_j \in \dd{C}$ for $j\in J$ with $\Re z_j=0$ for $j=1, \ldots, i$
and $\Re z_j<0$ for $j=i+1, \ldots, d$. 
For each positive integer $k$, let 
$z^k_i=z_i-1/k$. Then $\Re z^k_i<0$ for each $k\ge 1$ and $\lim_{k\to
  \infty} z^k_i=z_i$.
Let 
\begin{displaymath}
z^k=(z_1, \ldots, z_{i-1}, z^k_i, z_{i+1}, \ldots, z_d)^T \quad \text{
for } k\ge 1.
\end{displaymath}
Then $\lim_{k\to\infty} z^k =z$. 
Clearly, 
\begin{eqnarray*}
\lim_{k\to\infty}  \gamma(z^k)  =\gamma(z) \quad \text{and} \quad
 \lim_{k\to\infty}  \gamma_j(z^k)  =\gamma_j(z)\quad
\text{ for } j\in J.
\end{eqnarray*}
Note that 
\begin{eqnarray*}
  \varphi(z^k) 
= \int_{\R^d_+} f_k(x) \pi(dx),
\end{eqnarray*}
where  $f_{k}(x)=e^{\langle z^k, x\rangle}$. Since $\Re z^k_j\le 0$
for each $j\in J$,  one can check that  $\abs{f_k(x)}\le
1$ for each $x\in \R^d_+$. By the dominated convergence theorem, we have
\begin{displaymath}
\lim_{k\to\infty}\int_{\R^d_+} f_k(x) \pi(dx)  = \int_{\R^d_+} f(x) \pi(dx),
\end{displaymath}
where $f(x)$ is given in (\ref{eq:2}). Therefore, we have proved that 
\begin{displaymath}
 \lim_{k\to\infty}   \varphi(z^k)  =   \varphi(z).
\end{displaymath}
Similarly, we can prove 
\begin{displaymath}
 \lim_{k\to\infty}   \varphi_j(z^k)  =   \varphi_j(z)  \quad \text{
   for each } j\in J. 
\end{displaymath}
 By (\ref{eq:h1}), 
\begin{eqnarray*}
&& \lim_{k\to\infty } h(z^k)  = \lim_{k\to\infty } \Bigl(
  -\gamma(z^k) \varphi(z^k) + \sum_{j=1}^d \gamma_j(z^k)\varphi_j(z^k)
  \Bigr)\\
&=&
  -\gamma(z) \varphi(z) + \sum_{j=1}^d \gamma_j(z)\varphi_j(z) 
= h(z).
\end{eqnarray*}
 By the induction assumption, $h(z^k)=0$ for $k\ge 1$. Therefore, we
 have $h(z)=0$.
Thus, we have proved that  \eq{bar} holds for functions
$f(x)=e^{\langle z, x\rangle}$,
where $z=(z_1, \ldots, z_d)^T$
and each $z_i$ is a complex variable with $\Re z_i=0$.
 
{\bf Step 2.} In this step, we prove \eq{bar} holds for any function
$f \in C^{\infty}_K(\R^d)$, the space of $C^{\infty}$ functions on
$\R^d$ with compact 
support. Such an $f$ belongs to the so called Schwartz space on
$\dd{R}^d$ (page 236 in \cite{Foll1999}). For a function $f$ in the Schwartz space, its
Fourier transform $\hat{f}(\zeta)=\int_{\dd{R}^d}e^{-2\pi \imath
  \langle y, \zeta\rangle}f(y) dy$ is well defined for each $\zeta \in
\R^d$.  By the Fourier inversion theorem for the functions in Schwartz
space (Corollary 8.23 and Theorem 8.26 in \cite{Foll1999}),
 one can recover a function $f$ in Schwartz space through its Fourier transform
\begin{equation*}
f(x)=\int_{\dd{R}^d}e^{2\pi \imath  \langle x, \zeta\rangle}
\hat{f}(\zeta)d\zeta \quad \text{ for } x\in \R^d.
\end{equation*}

For any function $f$ belonging to Schwartz space, its Fourier
transform and itself are both absolutely integrable (Corollary 8.23 in
\cite{Foll1999}). Consequently,  we have following expressions for $L
f(x)$ and $D_i 
f(x)$ for $i\in J$ 
\begin{eqnarray*}
Lf(x)=\int_{\dd{R}^d} Lg(x,\zeta)\hat{f}(\zeta)d\zeta \quad \text{and} \quad
D_i f(x)=\int_{\dd{R}^d}D_i g(x,\zeta) \hat{f}(\zeta)d\zeta \quad
\text{ for } x\in \R^d,
\end{eqnarray*}
where $g(x,\zeta)=e^{2\pi \imath  \langle x, \zeta\rangle}$. Then one can check,
\begin{eqnarray*}
&&\int_{\dd{R}^{d}_{+}} Lf(x) \pi(dx)+\sum_{i=1}^d\int_{\dd{R}^{d}_{+} }D_i f(x)v_i(dx)\\
&&=\int_{\dd{R}^{d}_{+}} \int_{\dd{R}^d}Lg(x,\zeta) \hat{f}(\zeta)d\zeta \pi(dx)+\sum_{i=1}^d\int_{\dd{R}^{d}_{+} }\int_{\dd{R}^d}D_i g(x,\zeta)\hat{f}(\zeta)d\zeta v_i(dx)\\
&&=\int_{\dd{R}^d}\left\{\int_{\dd{R}^{d}_{+}} Lg(x,\zeta) \pi(dx)+\sum_{i=1}^d\int_{\dd{R}^{d}_{+} }D_i g(x,\zeta)v_i(dx)\right\}\hat{f}(\zeta)d\zeta\\
&&=0.
\end{eqnarray*}
The second equality is due to Fubini's Theorem. Fubini's Theorem holds
because $\hat{f}(\zeta)$ is absolutely integrable over $\dd{R}^d$. The
last equality holds because 
\begin{displaymath}
\int_{\dd{R}^{d}_{+}} Lg(x,\zeta)
\pi(dx)+\sum_{i=1}^d\int_{\dd{R}^{d}_{+} }D_i g(x,\zeta)v_i(dx)=0
\end{displaymath} for
all $\zeta \in {\dd R}^d$ and  the result in Step 2. Therefore we have
prove that 
\eq{bar} holds for 
$C^{\infty}$ functions with compact support. 

{\bf Step 3.} In this step, we prove \eq{bar} holds for all $C^2_K(\R^d)$
functions. Fix an $f(x) \in C^2_K(\R^d)$. We now construct a sequence
of functions $g^n(x)\in C^\infty_K(\R^d)$ that converges to $f$ in a
proper sense. The construction is standard and is adapted from
Proposition 8 on page 29 
of \cite{Yosi1980}.
 Let
$g^{n}(x)=\eta^{n}*f(x)=\int_{\R^d} \eta^n(y)f(x-y)dy$, where
$\eta^{n}(x)=n^d\eta(nx)$, 
\begin{equation}
  \label{eq:eta}
   \eta(x) = 
  \begin{cases}
    c\exp{(-{(1-\abs{x}^2)}^{-1})} & \text{ for } \abs{x}<1, \\
    0    & \text{ otherwise},
  \end{cases}
\end{equation}
and $c$ is a constant such that $\int_{\dd{R}^d}
\eta(x)dx=1$. 
It is known that  $\eta^n(x) \in 
C^{\infty}_K(\R^d)$ and  $g^{n}(x) \in C^{\infty}_K(\R^d)$.
By the result from Step 2,  we have
\begin{equation}
  \label{eq:gn}
\int_{{\dd R}^d_{+}} Lg^n(x) \pi(dx)+\sum_{i=1}^d\int_{{\dd
    R}^d_{+}}D_i g^n(x)v_i(dx)=0 \quad \text{ for each }  n\ge 1.
\end{equation}
Because $f\in C^2_K(\R^d)$, we have for each $x\in \R^d$
\begin{equation}
  \label{eq:Lgn}
 L g^{n}(x) =\eta^n*Lf(x) \quad \text{ and }  \quad D_ig^n(x) =
 \eta^n*D_if(x),
\end{equation}
and 
\begin{displaymath}
  \lim_{n\to\infty} Lg^n(x) = Lf(x) \quad\text{and}\quad 
  \lim_{n\to\infty} D_ig^n(x) = D_if(x), i\in J.
\end{displaymath}
Also, by (\ref{eq:Lgn}), one has for each $n\ge 1$
\begin{displaymath}
  \sup_{x\in\R^d} \abs{L g^{n}(x)} \le   \sup_{x \in\R^d} \abs{L f(x)} 
\quad \text{ and } \quad   \sup_{x\in\R^d} \abs{D_i g^{n}(x)} \le   \sup_{x\in\R^d}
\abs{D_i f(x)}.
\end{displaymath}
Taking $n\to\infty$ on both sides of (\ref{eq:gn}), by the bounded
convergence theorem, we have 
\begin{equation}
  \label{eq:barf}
\int_{{\dd R}^d_{+}} Lf(x) \pi(dx)+\sum_{i=1}^d\int_{{\dd
    R}^d_{+}}D_i f(x)v_i(dx)=0.
\end{equation}

{\bf Step 4.} In this step, we  prove that \eq{bar} holds for 
$f\in C^2_b(\R^d_+)$. 
 Fix an $f(x) \in C^2_b(\R^d_+)$. Then $\norm{f}_{\R^d_+}<\infty$, 
where $\norm{f}_{\R^d_+}$ is defined in (\ref{eq:fnorm}).
For any $ \epsilon >
0$, choose a constant $B>0$ such that
\begin{equation}
  \label{eq:epsilon}
\pi(\{|x|\ge B\})+\sum_{i=1}^d \nu_i(\{|x|\ge B\})<\epsilon.   
\end{equation}
Since $f\in C^2(\R^d_+)$, there exists $\delta>0$ such that $f$, its
first order derivatives, and second order derivatives are well defined 
and continuous on 
\begin{displaymath}
  \{x\in \R^d: x_i > -4\delta \text{ for } i\in J\}\cap \{\abs{x}< B+2\}.
\end{displaymath}
Let 
\begin{eqnarray*}
&&  h_1(y) = 1-\int_0^y \eta\bigl(u-(B+1)\bigr) du \quad \text{ for }
  y\in\R.   \\
&&  h_2(y) = \int_{-1}^{(y+2\delta)/\delta} \eta\bigl(u\bigr) du \quad
\text{ for } y\in \R, 
\end{eqnarray*}
where $\eta:\R\to \R$ is the one variable version defined in (\ref{eq:eta}).
One can check  that $h_k\in C^2(\R)$, $k=1,2$, 
\begin{displaymath}
  h_1(y) =
  \begin{cases}
    1 &  \text{ for } -\infty< y \le B, \\
    0 &  \text{ for } y \ge B+2,
  \end{cases}
\quad \text{and} \quad 
  h_2(y) =
  \begin{cases}
    0 &  \text{ for } -\infty< y \le -3\delta, \\
    1 &  \text{ for } y \ge -\delta.
  \end{cases}
\end{displaymath} 
Define 
\begin{displaymath}
  g(x)=
  \begin{cases}
    f(x)h_1(\abs{x})\prod_{i=1}^d h_2(x_i) & \text{ for } x\in \{x\in
    \R^d: x_i>-4\delta, i\in J\}, \\
    0 & \text{ otherwise},
  \end{cases}
\end{displaymath}
where $\abs{x}=\sqrt{\langle x, x\rangle}$.
Since $\abs{x}$ has
derivatives in all orders for $\abs{x}>B$, one can verify that $g \in
C^2_K(\R^d)$. 
It follows from Step 3 that 
\begin{equation}
  \label{eq:barg}
  \int_{\R^d_+} Lg(x)  \pi(dx)+\sum_{i=1}^d\int_{\R^d_+}D_i
  g(x)v_i(dx)=0.
\end{equation}
Because
\begin{displaymath}
  \label{eq:hbounds}
 \sup_{y\in \R}\abs{h_1(y)}\le 1, \quad   \sup_{y\in \R}\abs{h_1'(y)}\le
 1, \quad \text{and } \sup_{y\in \R}\abs{h_1''(y)}\le 3/2, 
\end{displaymath}
there exists a constant $C>0$, independent of $B$ and $\delta$, such that 
\begin{equation}
  \label{eq:normfinequality}
 \sup_{x\in\R^d_+} (\abs{Lf(x)} + \abs{Lg(x)}) \le C \norm{f}_{\R^d_+},
 \quad\sup_{x\in\R^d_+ }
(\abs{D_if(x)} + \abs{D_ig(x)})
\le C \norm{f}_{\R^d_+}.
\end{equation}

Note that for  $x\in \R^d_+\cap\{\abs{x}<B\}$, $Lf(x)=Lg(x)$ and
$D_if(x)=D_ig(x)$. Therefore, we have 
\begin{eqnarray*}
\lefteqn{\left\abs{\int_{\R^d_+} Lf(x)
    \pi(dx)+\sum_{i=1}^d\int_{\R^d_+}D_i f(x)v_i(dx)\right}} \\
&= &
\left\abs{\int_{\R^d_+\cap\{\abs{x}<B\}} Lg(x)
    \pi(dx)+\sum_{i=1}^d\int_{\R^d_+\cap\{\abs{x}<B\}}D_i g (x)v_i(dx)\right} \\
&& { }+
\left\abs{\int_{\R^d_+\cap\{\abs{x}\ge B\}} Lf(x)
    \pi(dx)+\sum_{i=1}^d\int_{\R^d_+\cap\{\abs{x}\ge B\}}D_i f(x)v_i(dx)\right} \\
&\le &
\left\abs{\int_{\R^d_+} Lg(x)
    \pi(dx)+\sum_{i=1}^d\int_{\R^d_+}D_i g (x)v_i(dx)\right} \\
&& { }+
\left\abs{\int_{\R^d_+\cap\{\abs{x}\ge B\}} Lg(x)
    \pi(dx)+\sum_{i=1}^d\int_{\R^d_+\cap\{\abs{x}\ge B\}}D_i g(x)v_i(dx)\right} \\
&& { }+
\left\abs{\int_{\R^d_+\cap\{\abs{x}\ge B\}} Lf(x)
    \pi(dx)+\sum_{i=1}^d\int_{\R^d_+\cap\{\abs{x}\ge B\}}D_i f(x)v_i(dx)\right} \\
&\le & C\norm{f}_{\R^d_+} \epsilon,
\end{eqnarray*}
where the last inequality follows from (\ref{eq:barg}),
(\ref{eq:normfinequality}), and (\ref{eq:epsilon}). Since $\epsilon>0$
can be arbitrarily small, we have that (\ref{eq:bar}) holds for $f\in
C^2_b(\R^d_+)$.

\end{proof}

\bibliography{dai05082014}
\end{document}